\let\cite=\citet
\newtheorem{theorem}{Theorem}[section]
\newtheorem{lemma}[theorem]{Lemma}
\begin{document}

\title[Artificial compressibility revisited]{High-order time stepping for
the Navier-Stokes equations with minimal computational complexity}

\author[J.-L.~Guermond]{Jean-Luc Guermond$^1$} \address{$^1$Department
  of Mathematics, Texas A\&M University 3368 TAMU, College Station, TX
  77843-3368, USA.  On leave from CNRS, France.}
\email{guermond@math.tamu.edu}

\author[P.D.~Minev ]{Peter D. Minev$^2$} \address{$^2$Department of
  Mathematical and Statistical Sciences, University of Alberta,
  Edmonton, Alberta Canada T6G 2G1.}  \email{minev@ualberta.ca}

\thanks{This material is based upon work supported by the National
  Science Foundation grants DMS-0713829, by the Air Force Office of
  Scientific Research, USAF, under grant/contract number
  FA9550-09-1-0424, and a Discovery grant of the
  National Science and Engineering Research Council of Canada. This
  publication is also partially based on work supported by Award
  No. KUS-C1-016-04, made by King Abdullah University of Science and
  Technology (KAUST).}


\keywords{Navier-Stokes, Fractional Time-Stepping, Direction Splitting}

\subjclass[2000]{
65N12,     
65N15,     
35Q30.    
}

\begin{abstract}
  In this paper we present extensions of the schemes proposed in
  \cite{GM14} that lead to a decoupling of the velocity components in
  the momentum equation. The new schemes reduce the solution of the
  incompressible Navier-Stokes equations to a set of classical
  uncoupled parabolic problems for each Cartesian component of the velocity.
  The pressure is explicitly recovered after the velocity is computed.
\end{abstract}

\maketitle

\section{Introduction}
In \cite{GM14}, we considered the possibility to construct high order
artificial compressibility schemes for incompressible flow.  The
resulting schemes require the solution of problems of the type
$\bu - \nu \dt \LAP \bu - \dt \GRAD \DIV \bu = \dt \bef$, $\dt$ being
the time step.  The corresponding discrete problem clearly has a
condition number of the order of $\dt h^{-2}$, $h$ being the spatial
step.  In this paper we consider some possibilities to improve this
algorithm by discretizing the $\GRAD \DIV$ operator in an
implicit-explicit fashion in order to decouple the Cartesian
components of the velocity and thereby reducing the problem to a
series of scalar-valued parabolic problems. In fact, such
strategies based on the direction splitting approach, which was popular at
that time, have been proposed in the literature in the 1960s and
70s.   For instance, a direction splitting scheme
that includes the splitting of the $\GRAD \DIV$ operator has been
proposed in the Russian literature by the groups of Yanenko (see \cite{Yanenko_1966},
\cite{MR46:6613}, section 8.3) and Ladizhenskaya ( see
\cite{Ladhyz70b}, chapter VI, section 9.2, and the references
therein).  In the Western literature, such schemes have been proposed and
analyzed by \cite{Te77}, chapter III, section 8.3.  In the present paper we
generalize the approach to make it applicable to non-Cartesian
grids without splitting, and we combine it with the defect correction
approach discussed in \cite{GM14} to increase the order.
Furthermore, we propose new direction splitting schemes that allow the
use of direct methods for three dimensional problems.

\section{Preliminaries}\label{sec:prelim}
\subsection{Formulation of the problem}
We consider the time-dependent Navier-Stokes equations on a finite
time interval $[0,T]$ and in a domain $\Omega$ in $\Real^d$ with a Lipschitz
boundary. Since the nonlinear term in the Navier-Stokes equations has
no significant influence on the pressure-velocity coupling and since
this term is usually made explicit, we henceforth mostly consider the
time-dependent Stokes equations written in terms of velocity $\ue$ and
pressure $\pe$:
\begin{equation}
\left\{
\begin{aligned}
&\partial_t \ue + \polA \ue+ \GRAD \pe = \bef
                  \quad \text{in $\Omega\times[0,T]$}, \\
&\DIV \ue  =  0    \quad \text{in $\Omega\times[0,T]$},\\
&\ue|_{\partial \Omega}  =  0 \quad \text{in [0,T]}, \quad \text{ and }
\ue|_{t=0} =  \ue_0 \quad \text{in $\Omega$},
\end{aligned}\right. \label{NS}
\end{equation}
where $\bef$ is a smooth source term and $\ue_0$ is a solenoidal
initial velocity field with zero normal trace at the boundary of
$\Omega$.  The operator $\polA$ is assumed to be linear, $\bH^1$-coercive
and bounded, \ie there are two constants $\nu>0$ and $M <\infty$ such
that $\int_\Omega \polA\bu\SCAL\bu \diff\bx \ge \nu \|\bu\|_{\bH^1(\Omega)}^2$ and
$|\int_\Omega \polA\bu\SCAL\bv \diff\bx |\le M
\|\bu\|_{\bH^1(\Omega)}\|\bv\|_{\bH^1(\Omega)}$, for all $\bu,\bv\in
\bH^1_0(\Omega)$.  For the sake of simplicity, we consider homogeneous
Dirichlet boundary conditions on the velocity.

We are going to be mainly concerned with time discretizations of the
above problem. Let $\dt>0$ be a time step and set $t^n=n\dt$ for
$0\leq n\leq N=\floor{T/\dt}$, where $\floor{\cdot}$ is the floor
function.  Let $\phi^0,\phi^1,\ldots \phi^N$ be some sequence of
functions in a Hilbert space $E$.  We denote by $\phi_{\dt}$ this
sequence, and we define the following discrete norms:
$\|\phi_{\dt}\|_{\ell^2(E)} := \big(\dt \sum_{n=0}^N
\|\phi^n\|_{E}^2\big)^{\frac12}$, $\|\phi_{\dt}\|_{\ell^\infty(E)} :=
\max_{0\leq n \leq N} \left(\|\phi^n\|_{E}\right)$.  In addition, we
denote the first differences of the elements of the sequence by
$\displaystyle \delta_t \phi^n= (\phi^n-\phi^{n-1}), n=1,\dots,N$, and their average 
$\displaystyle \bar{\phi}^n= (\phi^n+\phi^{n-1})/2, n=1,\dots,N$.  
The sequences $\delta_t  \phi^1,\ldots \delta_t  \phi^N$ and $\bar{\phi}^1,\ldots \bar{\phi}^N$
are denoted by $\delta_t \phi_{\dt}$ and $\bar{\phi}_{\dt}$ correspondingly. We also
denote by $c$ a generic constant that is independent of $\dt$ and $\epsilon$ but
possibly depends on the data, the domain, and the solution.

\subsection{High-order artificial compressibility}
In \cite{GM14} we introduced a series of second and third-order
schemes based on the following elementary first-order artificial
compressibility algorithm:
\begin{equation}
\frac{\bu^{n+1}-\bu^n}{\dt} + \polA\bu^{n+1} + \GRAD p^{n+1} = \bef^{n+1},
\quad \frac{\epsilon}{\dt}(p^{n+1}-p^n) +\DIV\bu^{n+1} = 0,
\label{first_order_art_comp}
\end{equation}
where $\epsilon>0$ is a user-dependent parameter that is usually
chosen to be proportional to $\dt$, \ie $\epsilon = \dt/\chi$ where $\chi$ is
of order one. One interesting property of this scheme is that it
decouples the velocity and the pressure; more precisely, the algorithm
can be recast as follows:
 \begin{equation}
\frac{\bu^{n+1}-\bu^n}{\dt} + \polA\bu^{n+1} -\chi \GRAD \DIV \bu^{n+1}
= \bef^{n+1} - \GRAD p^{n} ,
\quad p^{n+1} = -p^n  -\chi\DIV\bu^{n+1}. 
\label{first_order_art_comp_p_eliminated}
\end{equation}

The above algorithm has been extended to third-order accuracy in time
in \citep{GM14} by using a defect correction method. Denoting by
$B\bu$ the nonlinear term in the Navier-Stokes equations, the full
third order scheme is as follows:
\begin{equation}  n \ge 0, \qquad
\begin{cases}
  \textbf{nl}_0^{n+1}=B \bu_0^n, \\ \displaystyle
  \frac{\bu_0^{n+1}-\bu_0^n}{\dt} + \polA\bu_0^{n+1}  
-\chi\GRAD\DIV \bu_0^{n+1}+ \GRAD p_0^{n}  = \bef^{n+1} - \textbf{nl}_0^{n+1}\\
  p_0^{n+1} = p_0^{n}  - \chi \DIV\bu_0^{n+1},\\
  d\bu_0^{n+1} = (\bu_0^{n+1}-\bu_0^n)/\dt, \quad dp_0^{n+1} =
  (p_0^{n+1} -p_0^n)/\dt
\end{cases} \label{defect_ns_bootstrap0}
\end{equation}
\begin{equation}  n \ge 1, \qquad
\begin{cases}
d^2\bu_0^{n+1}= (d\bu_0^{n+1}-d\bu_0^{n})/\dt, \\
\textbf{nl}_1^n=B (\bu_0^{n} + \tau \bu_1^{n-1}), \\ \displaystyle
\frac{\bu_1^{n}-\bu_1^{n-1}}{\dt} + \polA\bu_1^{n}  -\chi\GRAD\DIV
\bu_1^n + \GRAD (p_1^{n-1} + dp_0^{n} )
\displaystyle
= -\frac12 d^2\bu_0^{n+1}
- \frac{\textbf{nl}_1^n - \textbf{nl}_0^n}{\dt},\\
p_1^{n} = p_1^{n-1}  + dp_0^{n}  - \chi  \DIV\bu_1^{n},\\
d\bu_1^{n} = (\bu_1^{n}-\bu_1^{n-1})/\dt, 
\quad dp_1^{n} = (p_1^{n}-p_1^{n-1})/\dt,
\end{cases}\label{defect_ns_bootstrap1}
\end{equation}
\begin{equation}  n \ge 2, \quad
\begin{cases}
d^2\bu_1^{n}=(d\bu_1^{n}-d\bu_1^{n-1})/\dt, \qquad
d^3\bu_0^{n+1}=(d^2\bu_0^{n+1}-d^2\bu_0^{n})/\dt, \\
\textbf{nl}_2^{n-1}=B (\bu_0^{n-1} + \tau \bu_1^{n-1} +\tau^2 \bu_2^{n-2})\\ \displaystyle
\frac{\bu_2^{n-1}-\bu_2^{n-2}}{\dt} + \polA\bu_2^{n-1} -\chi\GRAD \DIV
\bu_2^{n-1} + \GRAD (p_2^{n-2} + dp_1^{n-1}) \\ \displaystyle
\hspace{4cm}=  -\frac12 d^2\bu_1^{n}  + \frac16 d^3\bu_0^{n+1} 
 -  \frac{\textbf{nl}_2^{n-1} - \textbf{nl}_1^{n-1}}{\dt^2}\\
p_2^{n-1} = p_2^{n-2} + dp_1^{n-1} - \chi \DIV\bu_2^{n-1},\\
\bu^{n-1} =  \bu_0^{n-1} + \tau \bu_1^{n-1} +\tau^2 \bu_2^{n-1}, \quad
p^{n-1} =  p_0^{n-1} + \tau p_1^{n-1} +\tau^2 p_2^{n-1},
\end{cases}\label{defect_ns_bootstrap2}
\end{equation}
The stage \eqref{defect_ns_bootstrap0} yields a first order
approximation of the velocity and the pressure, the second stage
\eqref{defect_ns_bootstrap1} yields a second order approximation of
the velocity and the pressure, and the third stage
\eqref{defect_ns_bootstrap2} yields a third order approximation of the
velocity and the pressure.

One drawback of the above scheme is the presence of the $\GRAD \DIV$
operator since this operator couples all the Cartesian components of
the velocity and can lead to locking if not discretized properly.  In
the next section we introduce a first order artificial compressibility
scheme that decouples the different components of the velocity, \ie we
develop a decoupled version of the first stage
\eqref{defect_ns_bootstrap0}.  We will use this approach later in the
paper to modify the subsequent two stages and create a high-order time
stepping for the Navier-Stokes equations that requires only the
solution of a set of scalar-valued parabolic problems for each Cartesian
component of the velocity.  Since the proofs of stability of these
schemes in two and three dimensions differ somewhat, we will consider these
two cases separately.

\section{Splitting of the grad-div operator}\label{sec:grad_div}
\subsection{Splitting of $\polA$}
To be general we are going to assume that the operator $\polA$ admits
the following decomposition $\polA\bu = Au - \GRAD(\lambda \DIV \bu)$
where $\lambda$ is a smooth positive scalar field.  We assume also
that $A$ is block diagonal, $\bH^1$-coercive and bounded, \ie
$A\bu = (A_1 u_1,\ldots,A_d u_d)\tr$,
$\int_\Omega A\bu\SCAL\bu \diff\bx \ge \nu \|\bu\|_{\bH^1(\Omega)}^2$
and
$|\int_\Omega A\bu\SCAL\bv \diff\bx |\le M
\|\bu\|_{\bH^1(\Omega)}\|\bv\|_{\bH^1(\Omega)}$,
for all $\bu,\bv\in \bH^1_0(\Omega)$, where $u_1, \ldots, u_d$ are the
Cartesian components of $\bu$. This decomposition holds for instance
when
$\polA\bu = -\DIV(\mu(\GRAD \bu + (\GRAD \bu)\tr) + \kappa \DIV u
\polI)$
where $\polI$ is the $d\CROSS d$ identity matrix. Assuming in this
case that $\mu$ is constant over $\Omega$, we have
$A\bu = -\DIV (\mu \GRAD \bu)$ and $\lambda = \mu +\kappa$.

The first-order algorithm \eqref{first_order_art_comp_p_eliminated}
can be rewritten as follows in this new context:
\begin{equation}
\frac{\bu^{n+1}-\bu^n}{\dt} + A\bu^{n+1} -\GRAD (\varpi \DIV \bu^{n+1})
= \bef^{n+1} - \GRAD p^{n} ,
\quad p^{n+1} = -p^n  -\varpi\DIV\bu^{n+1}, \label{abtract_artificial_comp}
\end{equation}
where $\varpi := \lambda + \chi$ and we recall that $\chi = \tau/\epsilon$.

\subsection{Two-dimensional problems}\label{sec:2d}
Let us denote by $u_1, u_2$ the Cartesian components of $\bu$, \ie
$\bu=(u_1,u_2)\tr$.  We revisit the algorithm
\eqref{abtract_artificial_comp} and propose to consider the
following decoupled version thereof
\begin{align}
\frac{\bu^{n+1}-\bu^n}{\dt} + A\bu^{n+1} -
\left(\begin{aligned}
&\dx (\varpi(\dx u_1^{n+1} + \dy u_2^{n\phantom{+1}}  ))\\[-5pt]
&\dy (\varpi(\dx u_1^{n+1} + \dy u_2^{n+1} ))
\end{aligned}\right)
= \bef^{n+1} - \GRAD p^{n}
\label{abtract_artificial_comp_2D}
\end{align}
with $p^{n+1} = p^n  -\varpi\DIV\bu^{n+1}$. 
 Note that since we assumed
that $A$ is block diagonal, meaning that $A\bu = (A_1 u_1, A_2 u_2)$,
the Cartesian components of $\bu$ are indeed decoupled because the
algorithm can be recast as follows:
\begin{equation}
 \begin{cases}
   & \displaystyle \frac{u_1^{n+1}-u_1^n}{\dt} + A_1 u_1^{n+1}
   - \dx(\varpi\dx u_1^{n+1} )
   =f_1^{n+1} - \dx
   \left(p^{n}-\varpi\dy u_2^{n} \right)\\
   & \displaystyle \frac{u_2^{n+1}-u_2^n}{\dt} + A_2 u_2^{n+1} -
   \dy(\varpi \dy u_2^{n+1} )
   =f_2^{n+1}-\dy \left(p^{n}-\varpi\dx  u_1^{n+1} \right).
 \end{cases}
 \label{eq:gs_grad_div2}
\end{equation}
These two problems only require to solve classical scalar-valued
parabolic equations.  Before going through the stability analysis, let
us first observe that
\eqref{abtract_artificial_comp_2D} can be rewritten as
follows:
\begin{align}
\frac{\bu^{n+1}-\bu^n}{\dt} + A\bu^{n+1} -\GRAD(\varpi\DIV\tbu^{n+1} )
- (0,\dy (\varpi\dy \delta_t u_2^{n+1}))\tr
= \bef^{n+1} - \GRAD p^{n}
\label{abtract_artificial_comp_2D_bis}
\end{align}
where $\tbu^{n+1} = (u_1^{n+1},u_2^n)\tr$.  We assume that
$\bef=(f_1,f_2)=0$ in order to establish the stability of the scheme
with respect to the initial data.  The case of a non-zero source term
can be considered similarly, but since this unnecessarily introduces
irrelevant technicalities we will omit the source term in the rest of
the paper.  The scheme \eqref{abtract_artificial_comp_2D} is
unconditionally stable as stated by the following theorem.
\begin{theorem}\label{Thm:gs_grad_div_stability}
  Under suitable initialization and smoothness assumptions, the
  algorithm \eqref{abtract_artificial_comp_2D} is
  unconditionally stable, \ie for any finite time interval $(0, T]$ we
  have:
\begin{multline}
  \|\bu_{\dt}\|^2_{\ell^\infty(\Ldeuxd)} + \dt
  \|\varpi^{-\frac12}p_{\dt}\|_{\ell^\infty(\Ldeux)}^2 + \dt\|\varpi^{\frac12}\dy
  u_{2,{\dt}}\|_{\ell^\infty(\Ldeux)}^2
  + \dt^{-1}\|\delta_t \bu_{\dt}\|^2_{\ell^2(\Ldeuxd)} \\
  + 2\nu \|\bu_{\dt}\|^2_{\ell^2(\bH^1(\Omega))} +
  \|\varpi^{\frac12}\DIV\tbu\|_{\ell^2(L^2(\Omega))}^2 \le
  c(\|\bu^0\|^2_{\Ldeuxd} + \dt\|\varpi^{-\frac12}p^0\|^2_{L^2(\Omega)} +
  \dt \|\varpi^{\frac12}\dy u_2^0\|^2_{\Ldeuxd}).
\end{multline}
\end{theorem}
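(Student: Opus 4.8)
The plan is to run the standard weighted energy argument for artificial compressibility schemes, exploiting the equivalent form \eqref{abtract_artificial_comp_2D_bis}. First I would test \eqref{abtract_artificial_comp_2D_bis} (with $\bef=0$) in $\Ldeuxd$ against $2\dt\bu^{n+1}$. The discrete acceleration produces $\|\bu^{n+1}\|^2-\|\bu^n\|^2+\|\delta_t\bu^{n+1}\|^2$ via the identity $2(a-b,a)=\|a\|^2-\|b\|^2+\|a-b\|^2$, and coercivity of $A$ supplies at least $2\dt\nu\|\bu^{n+1}\|_{\bH^1(\Omega)}^2$ (only coercivity is used here; the block-diagonal structure of $A$ is what makes \eqref{eq:gs_grad_div2} decoupled but plays no role in the estimate). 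Integrating the two grad-div-type terms by parts, with no boundary contribution since $\bu^{n+1}\in\bH^1_0(\Omega)$, and using $\DIV\bu^{n+1}=\DIV\tbu^{n+1}+\dy\delta_t u_2^{n+1}$, the term coming from $-\GRAD(\varpi\DIV\tbu^{n+1})$ contributes $2\dt\|\varpi^{\frac12}\DIV\tbu^{n+1}\|^2+2\dt(\varpi\DIV\tbu^{n+1},\dy\delta_t u_2^{n+1})$, while the correction $-(0,\dy(\varpi\dy\delta_t u_2^{n+1}))\tr$ contributes, after the weighted polarization $2\int_\Omega\varpi(a-b)a=\|\varpi^{\frac12}a\|^2-\|\varpi^{\frac12}b\|^2+\|\varpi^{\frac12}(a-b)\|^2$, the quantity $\dt\|\varpi^{\frac12}\dy u_2^{n+1}\|^2-\dt\|\varpi^{\frac12}\dy u_2^{n}\|^2+\dt\|\varpi^{\frac12}\dy\delta_t u_2^{n+1}\|^2$. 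Here I use that $\varpi=\lambda+\chi$ is a fixed, smooth, strictly positive field, so $\varpi^{\pm\frac12}$ are bounded multipliers and the polarization is legitimate.

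I would then treat the pressure term: integration by parts turns $-2\dt(\GRAD p^n,\bu^{n+1})$ into $2\dt(p^n,\DIV\bu^{n+1})$, and the update $p^{n+1}=p^n-\varpi\DIV\bu^{n+1}$, equivalently $\delta_t p^{n+1}=-\varpi\DIV\bu^{n+1}$, gives $\DIV\bu^{n+1}=\varpi^{-1}(p^n-p^{n+1})$; the same weighted polarization then produces $\dt\|\varpi^{-\frac12}p^n\|^2-\dt\|\varpi^{-\frac12}p^{n+1}\|^2+\dt\|\varpi^{-\frac12}\delta_t p^{n+1}\|^2$.

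The only non-routine step, and the one I expect to be the crux, is disposing of the two terms that a priori threaten stability: the sign-indefinite cross term $2\dt(\varpi\DIV\tbu^{n+1},\dy\delta_t u_2^{n+1})$ produced by splitting the grad-div operator, and the term $\dt\|\varpi^{-\frac12}\delta_t p^{n+1}\|^2$, which carries the wrong sign once everything is moved to the left-hand side. Using $\delta_t p^{n+1}=-\varpi\DIV\bu^{n+1}$ once more together with $\DIV\bu^{n+1}=\DIV\tbu^{n+1}+\dy\delta_t u_2^{n+1}$,
\begin{multline*}
\|\varpi^{-\frac12}\delta_t p^{n+1}\|^2=\|\varpi^{\frac12}\DIV\bu^{n+1}\|^2\\
=\|\varpi^{\frac12}\DIV\tbu^{n+1}\|^2+2(\varpi\DIV\tbu^{n+1},\dy\delta_t u_2^{n+1})+\|\varpi^{\frac12}\dy\delta_t u_2^{n+1}\|^2,
\end{multline*}
so that upon subtracting this from the contributions above the cross term cancels exactly, $\dt\|\varpi^{\frac12}\dy\delta_t u_2^{n+1}\|^2$ cancels exactly, and one favorable copy of $\dt\|\varpi^{\frac12}\DIV\tbu^{n+1}\|^2$ survives. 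This cancellation is precisely the mechanism by which the explicit treatment of the off-diagonal part of the grad-div operator is compensated by the pressure update, and nothing else in the argument uses the particular structure of the scheme.

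Collecting these contributions, I would arrive at the per-step inequality
\begin{multline*}
\|\bu^{n+1}\|^2-\|\bu^n\|^2+\|\delta_t\bu^{n+1}\|^2+2\dt\nu\|\bu^{n+1}\|_{\bH^1(\Omega)}^2+\dt\|\varpi^{\frac12}\DIV\tbu^{n+1}\|^2\\
+\dt\|\varpi^{\frac12}\dy u_2^{n+1}\|^2-\dt\|\varpi^{\frac12}\dy u_2^{n}\|^2+\dt\|\varpi^{-\frac12}p^{n+1}\|^2-\dt\|\varpi^{-\frac12}p^{n}\|^2\le0,
\end{multline*}
valid for every $n\ge0$. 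Summing over $n=0,\ldots,m-1$ telescopes the $\|\bu^n\|^2$, $\dt\|\varpi^{\frac12}\dy u_2^n\|^2$ and $\dt\|\varpi^{-\frac12}p^n\|^2$ contributions, all remaining terms being nonnegative; taking the maximum over $m$ for the pointwise-in-time quantities and $m=N$ for the sums, and using the definitions of the discrete norms (the $n=0$ contribution to $\|\bu_\dt\|_{\ell^2(\bH^1(\Omega))}$ being $O(\dt)$ and absorbed by the initialization), then gives the stated estimate with an absolute constant $c$. The \emph{suitable initialization} hypothesis is used only to ensure that $\bu^0\in\Ldeuxd$, $p^0\in L^2(\Omega)$ and $\dy u_2^0\in L^2(\Omega)$, so that the right-hand side is finite, whereas the \emph{smoothness assumptions} are what make the integrations by parts and the weighted polarizations rigorous.
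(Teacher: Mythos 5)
Your proposal is correct and follows essentially the same route as the paper: test the rewritten form \eqref{abtract_artificial_comp_2D_bis} against $2\dt\bu^{n+1}$, use the pressure update (you polarize $2(p^n,\DIV\bu^{n+1})$ directly, the paper squares the update equation and adds it --- algebraically the same step), and observe that the $\|\varpi^{\frac12}\DIV\bu^{n+1}\|^2$ term produced by the pressure absorbs both the indefinite cross term and $\|\varpi^{\frac12}\dy\delta_t u_2^{n+1}\|^2$, leaving one favorable copy of $\|\varpi^{\frac12}\DIV\tbu^{n+1}\|^2$ before telescoping. The only cosmetic difference is that the paper exhibits this cancellation via the identity $2(a,b)-\|b\|^2=\|a\|^2-\|a-b\|^2$ together with $\DIV(\bu^{n+1}-\tbu^{n+1})=\dy\delta_t u_2^{n+1}$, whereas you expand $\|\varpi^{\frac12}\DIV\bu^{n+1}\|^2$ directly.
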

\begin{proof}
  We first multiply the momentum equation in
  \eqref{abtract_artificial_comp_2D_bis} by $2 \dt
  \bu^{n+1}$, then, using the identity
  $2(a-b,a)=\|a\|^2+\|a-b\|^2-\|b\|^2$ and the coerciveness of $A$ in
  $\bH^1(\Omega)$, we obtain:
\begin{multline*}
  \|\bu^{n+1}\|^2_{\Ldeuxd}  +  \|\delta_t \bu^{n+1}\|^2_{\Ldeuxd} 
- \|\bu^{n}\|^2_{\Ldeuxd} +2 \nu \dt \| \bu^{n+1}\|^2_{\bH^1(\Omega)} 
+ 2\dt(\varpi\DIV \tbu^{n+1},\DIV \bu^{n+1})\\
  + \dt (\|\varpi^{\frac12}\dy u_2^{n+1}\|_{\Ldeuxd}^2 + \|\varpi^{\frac12}\dy
  \delta_t u_2^{n+1}\|_{\Ldeuxd}^2 - \|\varpi^{\frac12}\dy u_2^{n}\|_{\Ldeuxd}^2) -2
  \dt (p^{n},\DIV \bu^{n+1}) \leq 0.
\end{multline*}
Now taking the square
of the pressure equation $\varpi^{-\frac12}p^{n+1} = \varpi^{-\frac12}p^n - \varpi^{\frac12} \DIV \bu^{n+1}$ gives
\[
\dt\|\varpi^{-\frac12}p^{n+1}\|_{L^2(\Omega)}^2 = \dt\|\varpi^{-\frac12}p^{n}\|_{L^2(\Omega)}^2) 
- 2\dt (\DIV \bu^{n+1}, p^{n}) + \dt\|\varpi^{\frac12}\DIV \bu^{n+1}\|_{\Ldeux}^2.
\]
Adding the above inequality and equation, we obtain:
\begin{multline*}
  \|\bu^{n+1}\|^2_{\Ldeuxd}  +  \|\delta_t \bu^{n+1}\|^2_{\Ldeuxd} 
 +2 \nu \dt \| \bu^{n+1}\|^2_{\bH^1(\Omega)} + \dt \|\varpi^{-\frac12}p^{n+1}\|_{L^2(\Omega)}^2 \\
+ \dt\|\varpi^{\frac12}\DIV \tbu^{n+1}\|_{\Ldeux}^2 
- \dt\|\varpi^{\frac12}\DIV (\bu^{n+1} - \tbu^{n+1})\|_{\Ldeux}^2 
+ \dt\|\varpi^{\frac12}\dy \delta_t u_2^{n+1}\|_{L^2(\Omega)}^2 \\
\dt\|\varpi^{\frac12}\dy u_2^{n+1}\|_{L^2(\Omega)}^2  
\leq  \|\bu^{n}\|^2_{\Ldeuxd} + \dt \|\varpi^{-\frac12}p^{n}\|_{L^2(\Omega)}^2
+ \dt\|\varpi^{\frac12}\dy u_2^n\|_{L^2(\Omega)}^2.
\end{multline*}
Note that $\DIV (\bu^{n+1} - \tbu^{n+1}) = \dy \delta_t u_2^{n+1}$, \ie
$\|\varpi^{\frac12}\DIV (\bu^{n+1} - \tbu^{n+1})\|_{\Ldeux} =
\|\varpi^{\frac12}\dy \delta_t u_2^{n+1}\|_{L^2(\Omega)}$.
Then summing the above inequality for $n=0, \dots, N-1$, with
$N=\floor{T/\dt}$, yields the desired result.
\end{proof}

The algorithm \eqref{abtract_artificial_comp_2D} can
be thought of as a Gauss-Seidel approximation of
\eqref{abtract_artificial_comp}. This observation, then
leads us to think of using the Jacobi approximation which consists of
replacing $\GRAD\DIV \bu^{n+1}$ in
\eqref{abtract_artificial_comp} by $\GRAD(\varpi\DIV \bu^n) + (\dx(\varpi\dx \delta_t
u_1^{n+1}),\dy(\varpi\dy \delta_t u_2^{n+1}))\tr$, that is to say
\begin{align}
\frac{\bu^{n+1}-\bu^n}{\dt} + A\bu^{n+1} - \GRAD(\varpi\DIV \bu^n)
-\left(\begin{matrix}
\dx (\varpi\dx \delta_t u_1^{n+1})\\
\dy (\varpi\dy \delta_t u_2^{n+1})
\end{matrix}\right)
= \bef^{n+1} - \GRAD p^{n}
\label{abtract_artificial_comp_Jacobi_2D}
\end{align}
with $p^{n+1} = p^n -\varpi \DIV \bu^{n+1}$. Let us define $\cbu^{n+1}
= (u_1^n,u_2^{n+1})\tr$.
\begin{theorem}\label{Thm:Jacobi_2D}
  Under suitable initialization and smoothness assumptions, the
  Jacobi algorithm \eqref{abtract_artificial_comp_Jacobi_2D} is
  unconditionally stable, \ie for any finite time interval $(0, T]$ we
  have:
\begin{multline}
  \|\bu_{\dt}\|^2_{\ell^\infty(\Ldeuxd)} + \dt
  \|\varpi^{-\frac12}p_{\dt}\|_{\ell^\infty(\Ldeux)}^2 
+ \dt\|\varpi^{\frac12}\dx u_{1,{\dt}}\|_{\ell^\infty(\Ldeux)}^2
+ \dt\|\varpi^{\frac12}\dy u_{2,{\dt}}\|_{\ell^\infty(\Ldeux)}^2\\
  + \dt^{-1}\|\delta_t \bu_{\dt}\|^2_{\ell^2(\Ldeuxd)} 
  + 2\nu \|\bu_{\dt}\|^2_{\ell^2(\bH^1(\Omega))} 
+ \|\varpi^{\frac12}\DIV \tbu\|_{\ell^2(L^2(\Omega))}^2 +
\|\varpi^{\frac12}\DIV \cbu\|_{\ell^2(L^2(\Omega))}^2 \\
\le
  c(\|\bu^0\|^2_{\Ldeuxd} +
  \dt\|\varpi^{-\frac12}p^0\|^2_{L^2(\Omega)} 
+ \dt \|\varpi^{\frac12}\dx u_1^0\|^2_{\Ldeuxd}
+ \dt \|\varpi^{\frac12}\dy u_2^0\|^2_{\Ldeuxd}).
\end{multline}
\end{theorem}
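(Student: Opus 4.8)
The plan is to reproduce the energy argument of Theorem~\ref{Thm:gs_grad_div_stability}; the only genuinely new ingredients are one algebraic identity and a single estimate (handled by the smoothness hypothesis) that replaces an exact cancellation valid for Gauss--Seidel but not for Jacobi. \emph{Reformulation.} Because $\DIV\bu^{n}+\dx\delta_t u_1^{n+1}=\DIV\tbu^{n+1}$ and $\DIV\bu^{n}+\dy\delta_t u_2^{n+1}=\DIV\cbu^{n+1}$, the implicit--explicit grad--div term in \eqref{abtract_artificial_comp_Jacobi_2D} collapses and the momentum equation becomes
\[
\frac{\bu^{n+1}-\bu^{n}}{\dt}+A\bu^{n+1}-\big(\dx(\varpi\DIV\tbu^{n+1}),\ \dy(\varpi\DIV\cbu^{n+1})\big)\tr=-\GRAD p^{n},
\]
with $p^{n+1}=p^{n}-\varpi\DIV\bu^{n+1}$; this is the Jacobi analogue of \eqref{abtract_artificial_comp_2D_bis}, and as in the statement I take $\bef=0$.

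\emph{Energy identity.} I would test this equation with $2\dt\bu^{n+1}$, integrate over $\Omega$, use $2(a-b,a)=\|a\|^2+\|a-b\|^2-\|b\|^2$ on the time increment, the $\bH^1(\Omega)$-coercivity of $A$, and integrate the grad--div and pressure terms by parts. The crucial computation is that, since $\DIV\tbu^{n+1}-\dx u_1^{n+1}=\dy u_2^{n}$ and $\DIV\cbu^{n+1}-\dy u_2^{n+1}=\dx u_1^{n}$, the identity $2(c,d)=\|c\|^2+\|d\|^2-\|c-d\|^2$ gives
\begin{multline*}
2\dt\!\int_\Omega\!\varpi\big(\DIV\tbu^{n+1}\dx u_1^{n+1}+\DIV\cbu^{n+1}\dy u_2^{n+1}\big)\,\diff\bx=\dt\|\varpi^{\frac12}\DIV\tbu^{n+1}\|_{\Ldeux}^2+\dt\|\varpi^{\frac12}\DIV\cbu^{n+1}\|_{\Ldeux}^2\\+\dt\big(\|\varpi^{\frac12}\dx u_1^{n+1}\|_{\Ldeux}^2-\|\varpi^{\frac12}\dx u_1^{n}\|_{\Ldeux}^2\big)+\dt\big(\|\varpi^{\frac12}\dy u_2^{n+1}\|_{\Ldeux}^2-\|\varpi^{\frac12}\dy u_2^{n}\|_{\Ldeux}^2\big).
\end{multline*}
This simultaneously produces the two dissipative norms $\|\varpi^{\frac12}\DIV\tbu\|_{\Ldeux}$ and $\|\varpi^{\frac12}\DIV\cbu\|_{\Ldeux}$ of the statement and telescoping contributions in $\|\varpi^{\frac12}\dx u_1\|_{\Ldeux}$ and $\|\varpi^{\frac12}\dy u_2\|_{\Ldeux}$. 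Squaring the pressure update $\varpi^{-\frac12}p^{n+1}=\varpi^{-\frac12}p^{n}-\varpi^{\frac12}\DIV\bu^{n+1}$, multiplying by $\dt$ and adding, the coupling term $2\dt(p^{n},\DIV\bu^{n+1})$ cancels exactly as in Theorem~\ref{Thm:gs_grad_div_stability}, leaving only the residual $+\dt\|\varpi^{\frac12}\DIV\bu^{n+1}\|_{\Ldeux}^2$ on the right. One thus reaches a per-step inequality whose left-hand side contains $\|\bu^{n+1}\|_{\Ldeuxd}^2+\|\delta_t\bu^{n+1}\|_{\Ldeuxd}^2+2\nu\dt\|\bu^{n+1}\|_{\bH^1(\Omega)}^2+\dt\|\varpi^{\frac12}\DIV\tbu^{n+1}\|_{\Ldeux}^2+\dt\|\varpi^{\frac12}\DIV\cbu^{n+1}\|_{\Ldeux}^2$ together with the values at $n+1$ of the telescoped quantities $\|\bu\|_{\Ldeuxd}^2$, $\dt\|\varpi^{-\frac12}p\|_{\Ldeux}^2$, $\dt\|\varpi^{\frac12}\dx u_1\|_{\Ldeux}^2$, $\dt\|\varpi^{\frac12}\dy u_2\|_{\Ldeux}^2$, and whose right-hand side is those same telescoped quantities at $n$ plus $\dt\|\varpi^{\frac12}\DIV\bu^{n+1}\|_{\Ldeux}^2$.

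\emph{Summation and the main obstacle.} Summing for $n=0,\dots,N-1$, everything on the left collects into the norms asserted by the theorem, while the right-hand side carries, besides the initial data $\|\bu^{0}\|_{\Ldeuxd}^2+\dt\|\varpi^{-\frac12}p^{0}\|_{\Ldeux}^2+\dt\|\varpi^{\frac12}\dx u_1^{0}\|_{\Ldeux}^2+\dt\|\varpi^{\frac12}\dy u_2^{0}\|_{\Ldeux}^2$, the remainder $\dt\sum_{n=1}^{N}\|\varpi^{\frac12}\DIV\bu^{n}\|_{\Ldeux}^2$. Disposing of this remainder is the main obstacle, and it is precisely where the Jacobi analysis departs from the Gauss--Seidel one: in Theorem~\ref{Thm:gs_grad_div_stability} the analogous quantity is cancelled exactly because the discretised grad--div operator is a genuine gradient, whereas the Jacobi splitting is not, so here it survives. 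Using the pressure update in the form $\varpi\DIV\bu^{n}=-\delta_t p^{n}$, the remainder equals $\dt\sum_{n=1}^{N}\|\varpi^{-\frac12}\delta_t p^{n}\|_{\Ldeux}^2$; this is exactly the term that the ``suitable initialization and smoothness assumptions'' of the statement are meant to control, since for a smoothly initialised scheme $\|\varpi^{-\frac12}\delta_t p^{n}\|_{\Ldeux}=\mathcal O(\dt)$, so the remainder is $\mathcal O(\dt^{2})$ and hence absorbed into the right-hand side (with a constant depending only on the data, the domain and the solution). Reinstating the $\ell^\infty$ norms by taking the maximum over the upper summation limit then gives the claimed estimate.
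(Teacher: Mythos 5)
Your reformulation of \eqref{abtract_artificial_comp_Jacobi_2D} (using $\DIV\bu^{n}+\dx\delta_t u_1^{n+1}=\DIV\tbu^{n+1}$ and $\DIV\bu^{n}+\dy\delta_t u_2^{n+1}=\DIV\cbu^{n+1}$) and your energy identity are correct, and they are the natural Jacobi analogue of the argument used for Theorems~\ref{Thm:gs_grad_div_stability} and~\ref{Thm:Jacobi_nD} (note the paper itself prints no proof of Theorem~\ref{Thm:Jacobi_2D}). Your application of $2(c,d)=\|c\|^2+\|d\|^2-\|c-d\|^2$ does produce exactly the dissipative norms $\|\varpi^{\frac12}\DIV\tbu\|$, $\|\varpi^{\frac12}\DIV\cbu\|$ and the telescoping quantities $\|\varpi^{\frac12}\dx u_1\|$, $\|\varpi^{\frac12}\dy u_2\|$ appearing in the statement, and the pressure coupling cancels as you say, leaving the residual $\dt\|\varpi^{\frac12}\DIV\bu^{n+1}\|_{\Ldeux}^2$ on the right-hand side. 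Up to that point the proposal is sound.

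The way you dispose of that residual, however, is a genuine gap: it is circular. The increment $\delta_t p^{n}$ is an increment of the \emph{discrete} solution generated by the scheme, not of the exact solution, so asserting $\|\varpi^{-\frac12}\delta_t p^{n}\|_{\Ldeux}=\mathcal O(\dt)$ is precisely the kind of a priori boundedness of the iterates that a stability theorem is supposed to establish; it cannot be smuggled in through ``suitable initialization and smoothness assumptions'', which concern the data and the exact solution only (if one could postulate $\mathcal O(\dt)$ discrete increments, every scheme would be unconditionally stable, and the constant $c$ would no longer depend only on the data). A Gronwall argument does not rescue the step either, because the pressure enters the energy only with the weight $\dt$, so bounding $\dt\|\varpi^{\frac12}\DIV\bu^{n+1}\|^2$ by $2\dt\|\varpi^{-\frac12}p^{n+1}\|^2+2\dt\|\varpi^{-\frac12}p^{n}\|^2$ gives a per-step amplification of order one, not $1+\mathcal O(\dt)$. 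The obstruction is intrinsic: writing $a=\dx u_1^{n+1}$, $a_0=\dx u_1^{n}$, $b=\dy u_2^{n+1}$, $b_0=\dy u_2^{n}$, one has pointwise
\begin{equation*}
(a+b_0)^2+(a_0+b)^2-(a+b)^2=(a_0+b_0)^2-2(a-a_0)(b-b_0),
\end{equation*}
so the quantity you need to be nonnegative equals $\dt\|\varpi^{\frac12}\DIV\bu^{n}\|_{\Ldeux}^2-2\dt\int_\Omega\varpi\,\dx\delta_t u_1^{n+1}\,\dy\delta_t u_2^{n+1}\diff\bx$, and the cross term has no sign and is not dominated by anything in your inequality. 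This is exactly the term that the extra factor $d$ in \eqref{abtract_artificial_comp_Jacobi_nD} is designed to absorb via $(\sum_i c_i)^2\le d\sum_i c_i^2$ in the proof of Theorem~\ref{Thm:Jacobi_nD}; without that factor the energy argument does not close, and a different idea would be needed to prove Theorem~\ref{Thm:Jacobi_2D} as stated.
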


\subsection{Jacobi ansatz in higher dimensions}
More generally in $d$ dimension one could think of replacing
$\GRAD\DIV \bu^{n+1}$ by
$\GRAD\DIV \bu^n + (\dxx \delta_t u_1^{n+1},\ldots, \partial_{x_d x_d}
\delta_t u_d^{n+1})\tr$.
This approximation may be stable in dimension three
but we did not make attempts to verify this.
However, the following alternative perturbation is also first-order
consistent
$\GRAD\DIV \bu^n + d (\dxx \delta_t u_1^{n+1},\ldots, \partial_{x_d
  x_d} \delta_t u_d^{n+1})\tr$, and we can consider the algorithm
\begin{align}
\frac{\bu^{n+1}-\bu^n}{\dt} + A\bu^{n+1} - \GRAD(\varpi\DIV \bu^n)
-d\left(\begin{matrix}
\dx (\varpi\dx \delta_t u_1^{n+1})\\\ldots\\
\dxd (\varpi\dxd \delta_t u_d^{n+1})
\end{matrix}\right)
= \bef^{n+1} - \GRAD p^{n}
\label{abtract_artificial_comp_Jacobi_nD}
\end{align}
with $p^{n+1} = p^n -\varpi \DIV \bu^{n+1}$. 
\begin{theorem}\label{Thm:Jacobi_nD}
  Under suitable initialization and smoothness assumptions, the
  Jacobi algorithm \eqref{abtract_artificial_comp_Jacobi_nD} is
  unconditionally stable, \ie for any finite time interval $(0, T]$ we
  have:
\begin{multline}
  \|\bu_{\dt}\|^2_{\ell^\infty(\Ldeuxd)} + \dt
  \|\varpi^{-\frac12}p_{\dt}\|_{\ell^\infty(\Ldeux)}^2 
+ \dt d\sum_{i=1}^d\|\varpi^{\frac12}\dxi u_{i,{\dt}}\|_{\ell^\infty(\Ldeux)}^2\\
  + \dt^{-1}\|\delta_t \bu_{\dt}\|^2_{\ell^2(\Ldeuxd)} 
  + 2\nu \|\bu_{\dt}\|^2_{\ell^2(\bH^1(\Omega))} 
+ \|\varpi^{\frac12}\DIV \bu\|_{\ell^2(L^2(\Omega))}^2 \\
\le
  c(\|\bu^0\|^2_{\Ldeuxd} +
  \dt\|\varpi^{-\frac12}p^0\|^2_{L^2(\Omega)} 
+ \dt d\sum_{i=1}^d\|\varpi^{\frac12}\dxi u_i^0\|_{\Ldeux}^2
\end{multline}
\end{theorem}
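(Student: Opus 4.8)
The plan is to adapt the energy argument used for Theorem~\ref{Thm:gs_grad_div_stability}. The only genuinely new ingredient is the elementary inequality
\[
\|\varpi^{\frac12}\DIV\bv\|_{\Ldeux}^2 \;\le\; d\sum_{i=1}^d \|\varpi^{\frac12}\dxi v_i\|_{\Ldeux}^2,
\qquad \bv = (v_1,\dots,v_d)\tr,
\]
which is the pointwise Cauchy--Schwarz bound $\bigl(\sum_{i=1}^d \dxi v_i\bigr)^2 \le d\sum_{i=1}^d (\dxi v_i)^2$ integrated against the nonnegative weight $\varpi$. This is precisely where the factor $d$ placed in front of the diagonal correction in \eqref{abtract_artificial_comp_Jacobi_nD} will be used; with the unweighted perturbation this inequality is lost and one does not obtain a clean estimate in dimension three.

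First I would test the momentum equation in \eqref{abtract_artificial_comp_Jacobi_nD} (with $\bef=0$) with $2\dt\bu^{n+1}$ and integrate over $\Omega$. On the time increment I use the identity $2(a-b,a)=\|a\|^2+\|a-b\|^2-\|b\|^2$, on the term $A\bu^{n+1}$ the coercivity of $A$, and on the three remaining terms integration by parts; the latter is legitimate because $\bu^{n+1}\in\bH^1_0(\Omega)$, so each $u_i^{n+1}$ vanishes on $\partial\Omega$ (this is where the smoothness hypothesis enters). Using also $\dxi\delta_t u_i^{n+1}=\delta_t(\dxi u_i^{n+1})$ to apply the first identity componentwise, one obtains
\begin{multline*}
\|\bu^{n+1}\|_{\Ldeuxd}^2+\|\delta_t\bu^{n+1}\|_{\Ldeuxd}^2-\|\bu^n\|_{\Ldeuxd}^2+2\nu\dt\|\bu^{n+1}\|_{\bH^1(\Omega)}^2+2\dt(\varpi\DIV\bu^n,\DIV\bu^{n+1})\\
+\dt\,d\sum_{i=1}^d\Bigl(\|\varpi^{\frac12}\dxi u_i^{n+1}\|_{\Ldeux}^2+\|\varpi^{\frac12}\dxi\delta_t u_i^{n+1}\|_{\Ldeux}^2-\|\varpi^{\frac12}\dxi u_i^{n}\|_{\Ldeux}^2\Bigr)-2\dt(p^n,\DIV\bu^{n+1})\le 0 .
\end{multline*}

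Next I would square the pressure update $\varpi^{-\frac12}p^{n+1}=\varpi^{-\frac12}p^{n}-\varpi^{\frac12}\DIV\bu^{n+1}$, multiply by $\dt$, and add the result to the previous inequality; the two occurrences of $2\dt(p^n,\DIV\bu^{n+1})$ cancel and a term $-\dt\|\varpi^{\frac12}\DIV\bu^{n+1}\|_{\Ldeux}^2$ remains on the left. The crux is to bound below the quantity
\[
2\dt(\varpi\DIV\bu^n,\DIV\bu^{n+1})+\dt\,d\sum_{i=1}^d\|\varpi^{\frac12}\dxi\delta_t u_i^{n+1}\|_{\Ldeux}^2-\dt\|\varpi^{\frac12}\DIV\bu^{n+1}\|_{\Ldeux}^2 .
\]
With $a=\varpi^{\frac12}\DIV\bu^{n+1}$ and $b=\varpi^{\frac12}\DIV\bu^{n}$ one has $\varpi^{\frac12}\DIV\delta_t\bu^{n+1}=a-b$, and the inequality of the first paragraph applied to $\bv=\delta_t\bu^{n+1}$ gives $d\sum_i\|\varpi^{\frac12}\dxi\delta_t u_i^{n+1}\|_{\Ldeux}^2\ge\|a-b\|_{\Ldeux}^2$; hence the displayed quantity is at least $\dt\bigl(2(b,a)+\|a-b\|_{\Ldeux}^2-\|a\|_{\Ldeux}^2\bigr)=\dt\|b\|_{\Ldeux}^2=\dt\|\varpi^{\frac12}\DIV\bu^{n}\|_{\Ldeux}^2\ge 0$. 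I expect this to be the main obstacle: the cross term $2\dt(\varpi\DIV\bu^n,\DIV\bu^{n+1})$ produced by the explicit treatment of $\GRAD\DIV$ is sign-indefinite and is not absorbed by the coercivity of $A$, so it can be controlled only by using jointly the positive consistency slack $\dt\,d\sum_i\|\varpi^{\frac12}\dxi\delta_t u_i^{n+1}\|^2$ and the pressure contribution $-\dt\|\varpi^{\frac12}\DIV\bu^{n+1}\|^2$, and the $d$-weighted Cauchy--Schwarz inequality is exactly what makes these three pieces recombine into the nonnegative, telescopable term $\dt\|\varpi^{\frac12}\DIV\bu^{n}\|^2$.

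Collecting everything yields the one-step estimate
\begin{multline*}
\|\bu^{n+1}\|_{\Ldeuxd}^2+\|\delta_t\bu^{n+1}\|_{\Ldeuxd}^2+2\nu\dt\|\bu^{n+1}\|_{\bH^1(\Omega)}^2+\dt\|\varpi^{-\frac12}p^{n+1}\|_{\Ldeux}^2+\dt\,d\sum_{i=1}^d\|\varpi^{\frac12}\dxi u_i^{n+1}\|_{\Ldeux}^2+\dt\|\varpi^{\frac12}\DIV\bu^{n}\|_{\Ldeux}^2\\
\le\|\bu^{n}\|_{\Ldeuxd}^2+\dt\|\varpi^{-\frac12}p^{n}\|_{\Ldeux}^2+\dt\,d\sum_{i=1}^d\|\varpi^{\frac12}\dxi u_i^{n}\|_{\Ldeux}^2 .
\end{multline*}
I would then sum this over $n=0,\dots,m-1$ for an arbitrary $m\le N$: every term on the left is nonnegative and the three non-accumulating quantities telescope, so the right-hand side collapses to its value at $n=0$. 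Maximizing over $m$ produces the $\ell^\infty$ norms, the choice $m=N$ produces the $\ell^2$ norms, and the endpoint contribution $\dt\|\varpi^{\frac12}\DIV\bu^{N}\|_{\Ldeux}^2$ is dominated, once more by Cauchy--Schwarz, by $\dt\,d\sum_i\|\varpi^{\frac12}\dxi u_i^{N}\|_{\Ldeux}^2$, so that after absorbing it (and the usual $\mathcal O(\dt)$ endpoint terms) into the generic constant $c$ one obtains the stated estimate, exactly as in the proof of Theorem~\ref{Thm:gs_grad_div_stability}.
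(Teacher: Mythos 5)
Your proof is correct and follows essentially the same route as the paper's: the identical energy test with $2\dt\bu^{n+1}$, the addition of the squared pressure update, and the key pointwise Cauchy--Schwarz bound $(\DIV\delta_t\bu^{n+1})^2\le d\sum_{i=1}^d(\dxi\delta_t u_i^{n+1})^2$ used to recombine the cross term, the pressure slack, and the diagonal correction into the nonnegative term $\dt\|\varpi^{\frac12}\DIV\bu^{n}\|_{\Ldeux}^2$. Your explicit treatment of the summation endpoint is slightly more careful than the paper's ``the conclusion follows readily,'' but the argument is the same.
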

\begin{proof} Proceeding as in the proof of
  Theorem~\ref{Thm:gs_grad_div_stability}, we obtain
\begin{multline*}
  \|\bu^{n+1}\|^2_{\Ldeuxd}  +  \|\delta_t \bu^{n+1}\|^2_{\Ldeuxd} 
 +2 \nu \dt \| \bu^{n+1}\|^2_{\bH^1(\Omega)} + \dt \|\varpi^{-\frac12}p^{n+1}\|_{L^2(\Omega)}^2
- \dt\|\varpi^{\frac12}\DIV \bu^{n+1}\|_{\Ldeux}^2 \\
+ 2\dt(\varpi\DIV \bu^n,\DIV\bu^{n+1}) 
+\dt d\sum_{i=1}^d \big(\|\varpi^{\frac12}\dxi u_i^{n+1}\|_{\Ldeux}^2 +
\|\varpi^{\frac12}\dxi \delta_t u_i^{n+1}\|_{\Ldeux}^2\big)\\
\leq  \|\bu^{n}\|^2_{\Ldeuxd} + \dt \|\varpi^{-\frac12}p^{n}\|_{L^2(\Omega)}^2
+ \dt d\sum_{i=1}^d \|\varpi^{\frac12}\dxi u_i^{n}\|_{\Ldeux}^2.
\end{multline*}
We now observe that
$(\DIV\delta_t \bu^{n+1})^2 \le d\sum_{i=1}^d(\dxi\delta_t u_i)^2$,
which in turn implies that
\begin{multline*}
-\|\varpi^{\frac12}\DIV \bu^{n+1}\|_{\Ldeux}^2 + 2(\varpi\DIV
\bu^n,\DIV\bu^{n+1}) + d\sum_{i=1}^d\|\varpi\dxi\delta_t u_i\|_{\Ldeux}^2\\
= -\|\varpi^{\frac12}\DIV \delta_t \bu^{n+1}\|_{\Ldeux}^2 +
\|\varpi^{\frac12}\DIV \bu^n\|_{\Ldeux}^2 
+ d\sum_{i=1}^d\|\varpi\dxi\delta_t u_i\|_{\Ldeux}^2\ge \|\varpi^{\frac12}\DIV \bu^n\|_{\Ldeux}^2.
\end{multline*}
The conclusion follows readily.
\end{proof}

\subsection{Three-dimensional problems}\label{sec:3d}
The Gauss-Seidel scheme introduced in the previous section can be directly extended to the
three dimensional case:
\begin{align}
\frac{\bu^{n+1}-\bu^n}{\dt} + A\bu^{n+1} -
\left(\begin{aligned}
&\dx (\varpi(\dx u_1^{n+1} + \dy u_2^{n\phantom{+1}} + \dz u_3^{n\phantom{+1}} ))\\[-5pt]
&\dy (\varpi(\dx u_1^{n+1} + \dy u_2^{n+1}+ \dz u_3^{n\phantom{+1}} ))\\[-5pt]
&\dy (\varpi(\dx u_1^{n+1} + \dy u_2^{n+1}+ \dz u_3^{n+1} ))
\end{aligned}\right)
= \bef^{n+1} - \GRAD p^{n}
\label{abtract_artificial_comp_3D}
\end{align}
with $p^{n+1} = p^n  -\varpi\DIV\bu^{n+1}$. Then again the three
Cartesian components of the velocity are decoupled. Unfortunately, we
have not been able to prove the stability of this scheme, but our
numerical experiments lead us to conjecture that it is unconditionally stable. 
We have found though that stability can be proved by adding the
first-order perturbation 
$-(0,\dy(\varpi\dy\delta_tu_2^{n+1}),\dz(\varpi(-\dy\delta_t
u_2^{n+1}+\dz\delta_tu_3^{n+1})))\tr$, leading to the following scheme
\begin{align}
\frac{\bu^{n+1}-\bu^n}{\dt} + A\bu^{n+1} -
\left(\begin{aligned}
&\dx (\varpi(\dx u_1^{n+1} + \dy u_2^{n\phantom{+1}} + \dz u_3^{n} ))\\[-5pt]
&\dy (\varpi(\dx u_1^{n+1} + \dy (2u_2^{n+1}-u_2^n)+ \dz u_3^{n} ))\\[-5pt]
&\dy (\varpi(\dx u_1^{n+1} + \dy u_2^{n}+ \dz (2u_3^{n+1} -u_3^n)))
\end{aligned}\right)
= \bef^{n+1} - \GRAD p^{n}.
\label{abtract_artificial_comp_modif_3D}
\end{align}
Stability will be established by relying on the following result.
\begin{lemma} \label{Lem:a1_b1_c1}
 Let $a_1, b_1, c_1, b_0, c_0$ three real numbers, then
the following identity holds:
\begin{multline}
2((a_1+b_0+c_0)a_1 + (a_1+b_1+c_0)b_1 + (a_1+b_1+c_1)c_1) \\
+2(b_1-b_0)b_1 - 2(b_1-b_0)c_1 + 2(c_1-c_0)c_1 
=(a_1+b_1+c_1)^2 \\+ (a_1+b_0+c_0)^2 
+ 2(b_1^2+c_1^2-b_0^2-c_0^2)
+(b_1-b_0-c_1+c_0)^2.
\end{multline}
\end{lemma}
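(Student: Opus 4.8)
The plan is to treat this as the elementary algebraic identity it is, but to organize the expansion so that the telescoping structure of the right-hand side emerges on its own rather than by brute-force coefficient matching. I would introduce the abbreviations $S_1 := a_1+b_1+c_1$, $S_0 := a_1+b_0+c_0$, $\beta := b_1-b_0$ and $\gamma := c_1-c_0$, so that $a_1+b_1+c_0 = S_0+\beta$ and $S_1 = S_0+\beta+\gamma$.

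First I would rewrite the principal bracket on the left. Using the relations above, $(a_1+b_0+c_0)a_1 + (a_1+b_1+c_0)b_1 + (a_1+b_1+c_1)c_1 = S_0 a_1 + (S_0+\beta)b_1 + S_1 c_1 = S_0 S_1 + \beta(b_1+c_1) + \gamma c_1$. Doubling this and adding the three perturbation terms $2(b_1-b_0)b_1 - 2(b_1-b_0)c_1 + 2(c_1-c_0)c_1 = 2\beta b_1 - 2\beta c_1 + 2\gamma c_1$, the two $\beta c_1$ contributions cancel and the whole left-hand side collapses to $2S_0S_1 + 4\beta b_1 + 4\gamma c_1$.

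Next I would apply the two elementary polarization identities $2x(x-y) = x^2 + (x-y)^2 - y^2$ and $2xy = x^2 + y^2 - (x-y)^2$. The first gives $4\beta b_1 = 2b_1^2 + 2\beta^2 - 2b_0^2$ and $4\gamma c_1 = 2c_1^2 + 2\gamma^2 - 2c_0^2$, while the second gives $2S_0S_1 = S_1^2 + S_0^2 - (S_1-S_0)^2 = S_1^2 + S_0^2 - (\beta+\gamma)^2$. Collecting everything, the left-hand side equals $S_1^2 + S_0^2 + 2(b_1^2+c_1^2-b_0^2-c_0^2) + \bigl(2\beta^2 + 2\gamma^2 - (\beta+\gamma)^2\bigr)$, and since $2\beta^2 + 2\gamma^2 - (\beta+\gamma)^2 = (\beta-\gamma)^2 = (b_1-b_0-c_1+c_0)^2$, this is precisely the claimed right-hand side.

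There is no genuine obstacle here; the computation is short once the substitution is in place, and the only thing one must get right is the cancellation of the $\beta c_1$ terms between the Gauss--Seidel-type bracket and the added perturbation. That cancellation is exactly what makes the stabilizing modification in \eqref{abtract_artificial_comp_modif_3D} work: with $a_1,b_1,c_1$ standing for $\partial_{x_1}u_1^{n+1},\partial_{x_2}u_2^{n+1},\partial_{x_3}u_3^{n+1}$ and $b_0,c_0$ for $\partial_{x_2}u_2^{n},\partial_{x_3}u_3^{n}$, the right-hand side is then manifestly in the form ``new divergence square $+$ mixed divergence square $+$ telescoped term $+$ nonnegative remainder'' that the energy estimate requires. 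If preferred, the identity can of course also be verified directly by expanding both sides into their quadratic monomials in $a_1,b_1,c_1,b_0,c_0$ and comparing coefficients.
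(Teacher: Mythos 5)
Your proof is correct: with $S_0=a_1+b_0+c_0$, $S_1=a_1+b_1+c_1$, $\beta=b_1-b_0$, $\gamma=c_1-c_0$, the left-hand side does collapse to $2S_0S_1+4\beta b_1+4\gamma c_1$, and the two polarization identities then reproduce the right-hand side exactly (I checked the coefficient of every quadratic monomial on both sides). The paper states this lemma without proof, treating it as a direct expansion, so your organized derivation is consistent with — and rather more illuminating than — what the authors intended, since it makes visible both the cancellation of the $\beta c_1$ terms that motivates the added perturbation and the emergence of the nonnegative remainder $(\beta-\gamma)^2$ needed in the energy estimate.
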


\begin{theorem} \label{Thm:GS_modiied_3D} Under suitable
  initialization and smoothness assumptions (assuming that $\bef=0$),
  the algorithm \eqref{abtract_artificial_comp_modif_3D} is
  unconditionally stable, \ie upon setting
  $\tbu^{n+1}=(u_1^{n+1},u_2^n,u_3^n)\tr$, the following holds for any
  finite time interval $(0, T]$:
\begin{multline}
 \|\bu_{\dt}\|^2_{\ell^\infty(\Ldeuxd)}  
+ \dt\|\varpi^{-\frac12}p_{\dt}\|^2_{\ell^\infty(L^2(\Omega))} 
+2 \dt|\varpi^{\frac12}\dy u_{2,\dt}\|^2_{\ell^\infty(L^2(\Omega))} \\
+2 \dt\|\varpi^{\frac12}\dz  u_{3,\dt}\|^2_{\ell^\infty(L^2(\Omega))}
+\dt^{-1} \|\delta_t
 \bu_{\dt}\|^2_{\ell^2(\Ldeuxd)}   +2 \nu
 \|\bu_{\dt}\|^2_{\ell^2(\bH^1(\Omega))}  \\
  + \|\varpi^{\frac12}\DIV \tbu_\dt\|^2_{\ell^2(L^2(\Omega))}  + \|\varpi^\frac12(\dy
  \delta_t u_{2,\dt}-\dz \delta_t u_{3\dt})\|^2_{\ell^2(L^2(\Omega))}  \\ 
\le c \left( \|\bu^{0}\|^2_{\Ldeuxd}+\|\varpi^{-\frac12}p^{0}\|^2_{L^2(\Omega)} +
  2\dt \|\varpi^{\frac12}\dy u_2^{0}\|^2_{L^2(\Omega)}+2\dt \|\varpi^{\frac12}\dz u_3^{0}\|^2_{L^2(\Omega)} \right).
\label{pr:3d_3}
\end{multline}
\end{theorem}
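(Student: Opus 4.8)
The plan is to reproduce the energy argument of the proof of Theorem~\ref{Thm:gs_grad_div_stability}, the only new ingredient being the pointwise algebraic identity of Lemma~\ref{Lem:a1_b1_c1}, which is tailored precisely to absorb the modified grad--div term. First I would test the momentum equation in \eqref{abtract_artificial_comp_modif_3D} with $2\dt\,\bu^{n+1}$ and integrate over $\Omega$. As in the two-dimensional case, the time-derivative term produces $\|\bu^{n+1}\|^2_{\Ldeuxd}+\|\delta_t\bu^{n+1}\|^2_{\Ldeuxd}-\|\bu^{n}\|^2_{\Ldeuxd}$ via $2(a-b,a)=\|a\|^2+\|a-b\|^2-\|b\|^2$; the term with $A$ is bounded below by $2\nu\dt\|\bu^{n+1}\|^2_{\bH^1(\Omega)}$ by coercivity; and the pressure gradient gives $-2\dt(\GRAD p^{n},\bu^{n+1})=2\dt(p^{n},\DIV\bu^{n+1})$ after integration by parts, using $\bu^{n+1}|_{\partial\Omega}=0$.

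The core step is the modified grad--div operator. Integrating its three rows by parts against $2\dt\,\bu^{n+1}$ transfers the outer derivatives onto $u_i^{n+1}$ and leaves $2\dt\int_\Omega\varpi\big[(\dx u_1^{n+1}+\dy u_2^{n}+\dz u_3^{n})\,\dx u_1^{n+1}+(\dx u_1^{n+1}+\dy(2u_2^{n+1}-u_2^{n})+\dz u_3^{n})\,\dy u_2^{n+1}+(\dx u_1^{n+1}+\dy u_2^{n}+\dz(2u_3^{n+1}-u_3^{n}))\,\dz u_3^{n+1}\big]\diff\bx$. Reading this integrand pointwise with $a_1=\dx u_1^{n+1}$, $b_1=\dy u_2^{n+1}$, $c_1=\dz u_3^{n+1}$, $b_0=\dy u_2^{n}$, $c_0=\dz u_3^{n}$, a short rearrangement (splitting $a_1+2b_1-b_0+c_0=(a_1+b_1+c_0)+(b_1-b_0)$ and $a_1+b_0+2c_1-c_0=(a_1+b_1+c_1)-(b_1-b_0)+(c_1-c_0)$) shows that twice this integrand is exactly the left-hand side of Lemma~\ref{Lem:a1_b1_c1}. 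Hence, by the Lemma and $\DIV\tbu^{n+1}=a_1+b_0+c_0$, the whole grad--div contribution equals $\dt\|\varpi^{\frac12}\DIV\bu^{n+1}\|^2_{\Ldeux}+\dt\|\varpi^{\frac12}\DIV\tbu^{n+1}\|^2_{\Ldeux}$, plus the telescoping pairs $2\dt(\|\varpi^{\frac12}\dy u_2^{n+1}\|^2_{\Ldeux}-\|\varpi^{\frac12}\dy u_2^{n}\|^2_{\Ldeux})$ and $2\dt(\|\varpi^{\frac12}\dz u_3^{n+1}\|^2_{\Ldeux}-\|\varpi^{\frac12}\dz u_3^{n}\|^2_{\Ldeux})$, plus the nonnegative perfect square $\dt\|\varpi^{\frac12}(\dy\delta_t u_2^{n+1}-\dz\delta_t u_3^{n+1})\|^2_{\Ldeux}$ --- whose favorable sign is exactly what the first-order perturbation in \eqref{abtract_artificial_comp_modif_3D} was designed to produce.

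Next I would square the pressure relation $\varpi^{-\frac12}p^{n+1}=\varpi^{-\frac12}p^{n}-\varpi^{\frac12}\DIV\bu^{n+1}$, integrate, multiply by $\dt$ to get $\dt\|\varpi^{-\frac12}p^{n+1}\|^2_{\Ldeux}=\dt\|\varpi^{-\frac12}p^{n}\|^2_{\Ldeux}-2\dt(p^{n},\DIV\bu^{n+1})+\dt\|\varpi^{\frac12}\DIV\bu^{n+1}\|^2_{\Ldeux}$, and add it to the momentum identity. The cross terms $2\dt(p^{n},\DIV\bu^{n+1})$ cancel and, crucially, so do the two copies of $\dt\|\varpi^{\frac12}\DIV\bu^{n+1}\|^2_{\Ldeux}$. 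Setting $E^{n}:=\|\bu^{n}\|^2_{\Ldeuxd}+\dt\|\varpi^{-\frac12}p^{n}\|^2_{\Ldeux}+2\dt\|\varpi^{\frac12}\dy u_2^{n}\|^2_{\Ldeux}+2\dt\|\varpi^{\frac12}\dz u_3^{n}\|^2_{\Ldeux}$, what survives is exactly $E^{n+1}-E^{n}+\|\delta_t\bu^{n+1}\|^2_{\Ldeuxd}+2\nu\dt\|\bu^{n+1}\|^2_{\bH^1(\Omega)}+\dt\|\varpi^{\frac12}\DIV\tbu^{n+1}\|^2_{\Ldeux}+\dt\|\varpi^{\frac12}(\dy\delta_t u_2^{n+1}-\dz\delta_t u_3^{n+1})\|^2_{\Ldeux}\le0$. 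Since $E^{n+1}\le E^{n}$ we get $E^{n}\le E^{0}$ for every $n$, which after taking the maximum over $n$ gives the four $\ell^\infty$ terms; summing the displayed inequality for $n=0,\dots,N-1$ with $N=\floor{T/\dt}$ telescopes the $E^{n}$ and produces the remaining $\ell^2$ terms. Collecting everything and absorbing fixed numerical factors into $c$ yields \eqref{pr:3d_3}.

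I expect the only delicate point to be the bookkeeping that matches the integrated-by-parts grad--div expression with the left-hand side of Lemma~\ref{Lem:a1_b1_c1} --- keeping the coefficients $2$, the $b_0,c_0$ versus $b_1,c_1$ assignments, and the sign of the cross term $(b_1-b_0-c_1+c_0)^2$ straight; once the Lemma is in hand, the remainder is mechanical and runs exactly parallel to the proof of Theorem~\ref{Thm:gs_grad_div_stability}. I would also note in passing that the variable (but smooth and positive, since $\varpi=\lambda+\chi$) coefficient $\varpi$ causes no trouble, because Lemma~\ref{Lem:a1_b1_c1} is applied pointwise before integrating against $\varpi$, and that the integrations by parts are justified under the assumed smoothness since $\bu^{n+1}$ vanishes on $\partial\Omega$.
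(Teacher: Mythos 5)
Your proposal is correct and follows essentially the same route as the paper: test the momentum equation with $2\dt\,\bu^{n+1}$, invoke Lemma~\ref{Lem:a1_b1_c1} pointwise (with $a_1=\dx u_1^{n+1}$, $b_1=\dy u_2^{n+1}$, $c_1=\dz u_3^{n+1}$, $b_0=\dy u_2^{n}$, $c_0=\dz u_3^{n}$) to rewrite the modified grad--div contribution, add the squared pressure update to cancel both $2\dt(p^n,\DIV\bu^{n+1})$ and $\dt\|\varpi^{1/2}\DIV\bu^{n+1}\|^2_{L^2}$, and telescope. Your explicit bookkeeping of how the integrated-by-parts term matches the left-hand side of the Lemma fills in exactly the step the paper leaves implicit, and the algebra checks out.
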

\begin{proof}
  The stability is be established by proceeding as in the two
  dimensional case.  Assuming that $\bef=0$, we first multiply the
  first three equations in \eqref{abtract_artificial_comp_modif_3D} by
  $2\dt\bu^{n+1}$, then using the identity
  $2(a-b,a)=\|a\|^2+\|a-b\|^2-\|b\|^2$ and Lemma~\ref{Lem:a1_b1_c1} to
  handle the $\GRAD\DIV$ term, we have
\begin{multline*}
  \|\bu^{n+1}\|^2_{\Ldeuxd}  +  \|\delta_t \bu^{n+1}\|^2_{\Ldeuxd} 
-\|\bu^{n}\|^2_{\Ldeuxd} 
+2 \nu \dt \| \bu^{n+1}\|^2_{\bH^1(\Omega)} 
-2 \dt (p^{n},\DIV \bu^{n+1})\\
+ \dt\|\varpi^{\frac12}\DIV \bu^{n+1}\|_{\Ldeux}^2 + \dt\|\varpi^{\frac12}\DIV \tbu^{n+1}\|_{\Ldeux}^2
+2\dt\|\varpi^{\frac12}\dy u_2^{n+1}\|^2_{L^2(\Omega)}+2\dt\|\dz u_3^{n+1}\|^2_{L^2(\Omega)} \\
-2\dt\|\varpi^{\frac12}\dy u_2^{n}\|^2_{L^2(\Omega)}
-2\dt\|\varpi^{\frac12}\dz u_3^{n}\|^2_{L^2(\Omega)}
+\dt\|\varpi^{\frac12}(\dy\delta_t u_2^{n+1} - \dz\delta_t u_3^{n+1})\|_{\Ldeux}^2\le 0.
\end{multline*}
Then we add the pressure equation
\[
\dt\|\varpi^{-\frac12}p^{n+1}\|_{L^2(\Omega)}^2 = \dt\|\varpi^{-\frac12}p^{n}\|_{L^2(\Omega)}^2) 
- 2\dt (\DIV \bu^{n+1}, p^{n}) + \dt\|\varpi^{\frac12}\DIV \bu^{n+1}\|_{\Ldeux}^2,
\]
and obtain
\begin{multline*}
  \|\bu^{n+1}\|^2_{\Ldeuxd}  +  \|\delta_t \bu^{n+1}\|^2_{\Ldeuxd} 
+2 \nu \dt \| \bu^{n+1}\|^2_{\bH^1(\Omega)} 
+\dt\|\varpi^{-\frac12}p^{n+1}\|_{L^2(\Omega)}^2
+ \dt\|\varpi^{\frac12}\DIV \tbu^{n+1}\|_{\Ldeux}^2\\
+2\dt\|\varpi^{\frac12}\dy u_2^{n+1}\|^2_{L^2(\Omega)}+2\dt\|\dz u_3^{n+1}\|^2_{L^2(\Omega)} 
+ \dt\|\varpi^{\frac12}(\dy\delta_t u_2^{n+1} - \dz\delta_t u_3^{n+1})\|_{\Ldeux}^2\\
\le \|\bu^{n}\|^2_{\Ldeuxd} +
\dt\|\varpi^{-\frac12}p^{n}\|_{L^2(\Omega)}^2) 
+2\dt\|\varpi^{\frac12}\dy u_2^{n}\|^2_{L^2(\Omega)}
+2\dt\|\varpi^{\frac12}\dz u_3^{n}\|^2_{L^2(\Omega)}.
\end{multline*}
Finally, the result follows by summing the above inequality for
$n=0, \dots, N-1$.
\end{proof}

\section{Direction splitting schemes}
 Direction splitting algorithms based on the artificial compressibility
formulation of the Navier-Stokes equations have been proposed many
years ago (see \cite{MR46:6613}, section 8.3, \cite{Ladhyz70b},
chapter VI, section 9.2, \cite{Te77}, chapter III, section 8.3), and
they have 
largely been abandoned in the last twenty years.  Restricting the
discussion to two dimensions for simplicity,
all of the above direction splitting schemes can be
considered as discretizations of the following set of PDEs formulated
in \citep{MR46:6613} and approximating the incompressible Navier-Stokes
equations with constant viscosity:
\begin{equation}
 \left\{\begin{aligned}
 &\frac{1}{2} \partial_t u_1 + u_1\dx u_1 + \dx  p = \nu \dxx u_1,\\
 &\frac{1}{2} \partial_t u_2 + u_1\dx u_2  = \nu \dxx u_2,\\
 &\frac{\epsilon}{2} \partial_t p + \epsilon u_1 \dx p + p \dx u_1= 0,\\
  \end{aligned}\right.
\label{eq:Yan1}
\end{equation} 
in the first half of a given time interval $[t^n, t^n+\frac12 \dt]$ and 
\begin{equation}
 \left\{\begin{aligned}
 &\frac{1}{2} \partial_tu_1 + u_2\dy u_1 = \nu \dyy u_1,\\
 &\frac{1}{2} \partial_tu_2 + u_2\dy u_2 + \dy p = \nu \dyy u_2,\\
 &\frac{\epsilon}{2}  \partial_t p + \epsilon u_2 \dy p + p \dy u_2= 0,\\
  \end{aligned} \right.
\label{eq:Yan2}
\end{equation} 
in the second half $[t^n+\frac12\dt, t^{n+1}]$.  Note that in  \cite{Ladhyz70b} and  \cite{Te77} the pressure
equations are formulated slightly differently:
\begin{align}
 \frac{\epsilon}{2}  \partial_t p + \dx u_1&= 0, \quad \text{ in }
  [t^n, t^n+\frac12\dt], \\
 \frac{\epsilon}{2} \partial_t p + \dy u_2&= 0,\quad \text{ in } [t^n+\frac12\dt, t^{n+1}],
 \end{align} 
 In the scheme of \cite{MR46:6613} the pressure equations are derived
 from the compressible mass conservation equation at vanishing Mach
 number, in \citep{Ladhyz70b} and \citep{Te77} they are derived from
 the simpler (but less physical) perturbation of the incompressibility
 constraint: $\epsilon \partial_t p + \DIV \bu=0$.  Both algorithms
 are formally first order accurate in time. However, the actual rate
 of convergence was not established in the above references, despite that convergence was proven
 in both cases.
 
 In the present paper we are aiming at the development of artificial
 compressibility schemes of order two and higher.  In \cite{GM14} we
 proposed two possible approaches for extending the convergence order.
 The first one uses a bootstrapping perturbation of the
 incompressibility constraint combined with a high order BDF time
 stepping for the momentum equation.  The second approach is based on
 a defect (or deferred) correction for both, the momentum and the
 continuity equations.  Since we are presently unable to devise a
 higher order defect correction scheme based on any of the first order
 direction splitting methods discussed above (see \eg
 \eqref{eq:Yan1}-\eqref{eq:Yan2}), we consider here a scheme that is a
 first order perturbation of the formally second order splitting
 scheme due to \cite{D62}.  For simplicity, we will not consider the
 nonlinear terms in what follows, however, there is no particular
 difficulty to extend the scheme to the nonlinear case by using Euler
 explicit discretization. It is also possible to discretize the
 nonlinear terms semi-implicitly by proceeding as in
 \citep{MR46:6613}, \citep{Ladhyz70b}, and \citep{Te77}.  Denoting by
 $p^{n+1/2}$ and $\bar{p}_{\dt}$ the approximation of the pressure at
 time $t^{n+1/2}$ and the time sequence of pressure values,
 respectively, the derivation of the scheme starts from the
 Crank-Nicolson discretization of the momentum equation of the
 artificial compressibility system that is given by:
\begin{equation}
 \left\{\begin{aligned}
 &\frac{\delta_t \bu^{n+1}}{\dt} + \frac{1}{2} A (\bu^{n+1}+\bu^n)=- \frac{1}{2} \GRAD p^{n+1/2}  + \bef^{n+1/2},\\
 & (p^{n+1/2} -p^{n-1/2}) + \frac{\varpi }{2}\DIV (\bu^{n+1}+\bu^n) = 0,
  \end{aligned}\right.
  \label{eq:ac_cn}
\end{equation}

Let us assume that the operators $A_1$ and $A_2$ can be split into a sum of
two self-adjoint semi-definite positive operators i.e., $A_1=A_{11}+A_{12}$ and $A_2=A_{21}+A_{22}$.
For example, if $A_1 = -\nu \LAP$ and $A_2= -\nu \LAP$ then the direction
splitting algorithm presumes the splitting  $A_{11}=-\nu \dxx$, $A_{12}=-\nu \dyy$, 
$A_{21}=-\nu \dxx$, $A_{22}=-\nu \dyy$.  Let us also assume that $\varpi$ is
constant over $\Omega$ and introduce the operators: $C_{11}=-\varpi \dxx$, $C_{12}=-\varpi \dxy$,
$C_{21}=-\varpi \dyx$, $C_{22}=-\varpi \dyy$. 
Then the direction splitting scheme is given by:
\begin{equation}
 \left\{\begin{aligned}
 & \left(I + \frac{\dt}{2}
   \left(A_{11}+C_{11}\right)\right)\left(I+\frac{\dt}{2} A_{12}\right)
 \frac{\delta_t u_1^{n+1}}{\dt} 
+ \left(A_{11} + C_{11}+A_{12}\right) u_1^{n}=\\
&  -\frac{1}{2}C_{12} (u_2^{n}+u_2^{n-1}) - \dx p^{n-1/2} + f_1^{n+1/2},\\
 & \displaystyle \left(I + \frac{\dt}{2}
   \left(A_{22}+C_{22}\right)\right)\left(I+\frac{\dt}{2} A_{21}\right)
 \frac{\delta_t u_2^{n+1}}{\dt} 
+ \left(A_{22} + C_{22}+A_{21}\right) u_2^{n}= \\
& -\frac{1}{2} C_{21}  (u_1^{n+1}+u_1^n) - \dy p^{n-1/2} + f_2^{n+1/2}, \\
 & (p^{n+1/2} -p^{n-1/2}) + \frac{\varpi }{2}\DIV\ (\bu^{n+1}+\bu^n) = 0,
  \end{aligned}\right.
\label{eq:split_1}
\end{equation}
where $I$ is the identity operator. Note that this is a perturbation
of the Crank-Nicolson discretization \eqref{eq:ac_cn} that includes
the formally second order terms
$\dt (A_{11}+C_{11}) A_{12} \delta_t u_1^{n+1}/4, \dt (A_{22}+C_{22})
A_{21} \delta_t u_2^{n+1}/4$,
and the term $C_{12} (u_2^{n+1}+u_2^{n}) $ is extrapolated by
$C_{12} (u_2^{n}+u_2^{n-1})$.  The last perturbation is first order
accurate of course, but since the perturbation of the
incompressibility constraint is also first order, it does not change
the overall first order approximation of the unsteady Stokes
equations.  In the next section we will demonstrate how to correct
these first order defects of the scheme and lift the accuracy to
second order.

 Let us now assume that the operator $A_{11} + C_{11}$ commutes with $ A_{12}$, and  $A_{22}+ C_{22}$ 
commutes with $A_{21}$. Such commutativity conditions are satisfied if, for 
example, the viscosity $\nu$ is constant and if the domain boundary consists 
of straight lines parallel to one of the coordinate axes.  Then the operator
\begin{align*}
&\bB = (B_1, B_2)^T=(A_{11}+ C_{11})A_{12}, (A_{22}+ C_{22})A_{21})^T=\\
&\frac{1}{2} ((A_{11}+C_{11})A_{12}+A_{12}(A_{11}+ C_{11}), (A_{22}+ C_{22})A_{21}+A_{21}(A_{22}+ C_{22}))^T
\end{align*}
is a self-adjoint positive semi-definite operator defining a semi-norm that we denote by
$|.|_{\bB(\Omega)}$. Under such conditions it is quite straightforward to prove the following theorem providing the stability  estimate for the splitting scheme.
The stability without the commutativity assumption is significantly more difficult to verify, particularly in 3D, and it is still an open problem (see for example
the discussion about splitting schemes for non-commutative operators in \cite{Vab14}).
\begin{theorem}\label{Thm:split_gs_grad_div_stability}
  Under suitable initialization and smoothness assumptions, if 
  $(A_{11}+C_{11})A_{12}=A_{12}(A_{11}+C_{11}), A_{22}+C_{22})A_{21}=A_{21}(A_{22}+C_{22})$, 
  and if $\bef=0$, the algorithm \eqref{eq:split_1} is unconditionally stable, \ie for any finite time interval $(0, T]$ we have:
\begin{align*}
 & \|\bu_{\dt}\|^2_{\ell^\infty(\Ldeuxd)} + \dt
  \|\varpi^{-\frac12}\bar{p}_{\dt}\|_{\ell^\infty(\Ldeux)}^2 + \dt\|\varpi^{\frac12}\dy
  \bar{u}_{2,{\dt}}\|_{\ell^\infty(\Ldeux)}^2 +\\ 
  &+ 2\nu \|\bar{\bu}_{\dt}\|^2_{\ell^2(\bH^1(\Omega))} +  \frac{\dt^2}{4}|{\bu}_{\dt}|^2_{\ell^\infty(\bB(\Omega))} +
  \|\varpi^{\frac12}\DIV\tbu\|_{\ell^2(L^2(\Omega))}^2 \le\\
  &c(\|\bu^0\|^2_{\Ldeuxd} + \dt\|\varpi^{-\frac12}p^{-1/2}\|^2_{L^2(\Omega)} +
  \dt \|\varpi^{\frac12}\dy \bar{u}_2^0\|^2_{\Ldeuxd}+ \frac{\dt^2}{4}|{\bu}^0|^2_{\bB(\Omega)}),
\end{align*}
where $\bar{u}_2^0=u_2^0$, ${p}^{-1/2}=p^0$, and $\tbu^{n+1}=(\bar{u}_1^{n+1},\bar{u}_2^n)\tr$.
\end{theorem}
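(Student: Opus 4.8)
The plan is to transcribe the energy argument behind Theorem~\ref{Thm:gs_grad_div_stability} into Crank--Nicolson form, the only genuinely new bookkeeping being the factored operators and the seminorm $|\cdot|_{\bB(\Omega)}$. \textbf{Step 1 (reduction to a compact form).} First I would expand the factored operators: since $A_{11}+A_{12}=A_1$ and, by the commutativity hypothesis, $B_1:=(A_{11}+C_{11})A_{12}=\tfrac12\bigl((A_{11}+C_{11})A_{12}+A_{12}(A_{11}+C_{11})\bigr)$ is self-adjoint and positive semi-definite (and likewise $B_2$), one has $\bigl(I+\tfrac{\dt}{2}(A_{11}+C_{11})\bigr)\bigl(I+\tfrac{\dt}{2}A_{12}\bigr)=I+\tfrac{\dt}{2}(A_1+C_{11})+\tfrac{\dt^2}{4}B_1$. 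Applying this to $\delta_t u_1^{n+1}/\dt$, combining $\tfrac12(A_1+C_{11})\delta_t u_1^{n+1}$ with $(A_1+C_{11})u_1^n$ to form $(A_1+C_{11})\bar u_1^{n+1}$, treating the second equation identically, and using that $\varpi$ is constant so that $C_{11}=-\varpi\dxx$, $C_{12}=-\varpi\dxy$, etc., one rewrites \eqref{eq:split_1} in exactly the form of \eqref{abtract_artificial_comp_2D_bis}, namely
\[
\tfrac1{\dt}\delta_t\bu^{n+1}+A\bar\bu^{n+1}+\tfrac{\dt}{4}\bB\,\delta_t\bu^{n+1}-\GRAD(\varpi\DIV\tbu^{n+1})-\bigl(0,\dy(\varpi\dy\delta_t\bar u_2^{n+1})\bigr)\tr=-\GRAD p^{n-1/2},
\]
together with $p^{n+1/2}-p^{n-1/2}+\varpi\DIV\bar\bu^{n+1}=0$, where $\tbu^{n+1}=(\bar u_1^{n+1},\bar u_2^n)\tr$.

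\textbf{Step 2 (energy identity).} I would then test this momentum equation with $2\dt\,\bar\bu^{n+1}=\dt(\bu^{n+1}+\bu^n)$. The time-derivative term yields $\|\bu^{n+1}\|_{\Ldeuxd}^2-\|\bu^n\|_{\Ldeuxd}^2$; the viscous term is bounded below by $2\nu\dt\|\bar\bu^{n+1}\|_{\bH^1(\Omega)}^2$ by coerciveness of $A$; the defect term $\tfrac{\dt^2}{4}(\bB\,\delta_t\bu^{n+1},\bu^{n+1}+\bu^n)$ telescopes to $\tfrac{\dt^2}{4}\bigl(|\bu^{n+1}|_{\bB(\Omega)}^2-|\bu^n|_{\bB(\Omega)}^2\bigr)$ because $\bB$ is self-adjoint positive semi-definite; the grad--div terms are integrated by parts using the homogeneous Dirichlet condition on $\bu$; and the $\dy\delta_t\bar u_2^{n+1}$ term is handled with $2(a-b,a)=\|a\|^2+\|a-b\|^2-\|b\|^2$ applied to $a=\varpi^{1/2}\dy\bar u_2^{n+1}$, $b=\varpi^{1/2}\dy\bar u_2^n$. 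Squaring the pressure relation $\varpi^{-1/2}p^{n+1/2}=\varpi^{-1/2}p^{n-1/2}-\varpi^{1/2}\DIV\bar\bu^{n+1}$, multiplying by $\dt$, and adding it to the tested momentum equation cancels the cross term $2\dt(p^{n-1/2},\DIV\bar\bu^{n+1})$, exactly as in the proof of Theorem~\ref{Thm:gs_grad_div_stability}.

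\textbf{Step 3 (staggered cancellation and conclusion).} Since $\DIV\bar\bu^{n+1}=\DIV\tbu^{n+1}+\dy\delta_t\bar u_2^{n+1}$, one has $2(\varpi\DIV\tbu^{n+1},\DIV\bar\bu^{n+1})-\|\varpi^{1/2}\DIV\bar\bu^{n+1}\|_{\Ldeux}^2=\|\varpi^{1/2}\DIV\tbu^{n+1}\|_{\Ldeux}^2-\|\varpi^{1/2}\dy\delta_t\bar u_2^{n+1}\|_{\Ldeux}^2$, and the negative term here cancels the positive $\|\varpi^{1/2}\dy\delta_t\bar u_2^{n+1}\|_{\Ldeux}^2$ produced in Step~2 --- this is the two-dimensional Gauss--Seidel cancellation of Theorem~\ref{Thm:gs_grad_div_stability}. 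What survives is the per-step estimate $E^{n+1}+2\nu\dt\|\bar\bu^{n+1}\|_{\bH^1(\Omega)}^2+\dt\|\varpi^{1/2}\DIV\tbu^{n+1}\|_{\Ldeux}^2\le E^n$ with $E^m:=\|\bu^m\|_{\Ldeuxd}^2+\tfrac{\dt^2}{4}|\bu^m|_{\bB(\Omega)}^2+\dt\|\varpi^{-1/2}p^{m-1/2}\|_{\Ldeux}^2+\dt\|\varpi^{1/2}\dy\bar u_2^m\|_{\Ldeux}^2$. Induction gives $E^m\le E^0$ for all $m$, hence all the $\ell^\infty$ bounds (replacing $p^{m-1/2}$ by $\bar p^{m-1/2}$ through $\|\bar p^{m-1/2}\|^2\le\tfrac12(\|p^{m-1/2}\|^2+\|p^{m-3/2}\|^2)$), while summing over $n=0,\dots,N-1$ gives the $\ell^2$ bounds on $\bar\bu$ in $\bH^1(\Omega)$ and on $\varpi^{1/2}\DIV\tbu$; the numerical constants are absorbed into $c$, and one recalls $\bar u_2^0=u_2^0$, $p^{-1/2}=p^0$.

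\textbf{Main obstacle.} The delicate point is Step~1: it is exactly the commutativity hypotheses that let the factored operator collapse to $I+\tfrac{\dt}{2}(A_1+C_{11})+\tfrac{\dt^2}{4}\bB$ with $\bB$ self-adjoint and positive semi-definite, which is what makes the defect term $\tfrac{\dt}{4}\bB\,\delta_t\bu^{n+1}$ simultaneously telescope and carry a sign when tested against $\bu^{n+1}+\bu^n$. Without commutativity one must keep the symmetrized $\bB$, the product $(A_{11}+C_{11})A_{12}$ is no longer sign-definite, and this elementary energy argument breaks down --- which is precisely why the non-commutative case, and especially its three-dimensional counterpart, is open, as noted above. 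Beyond Step~1, the argument is a routine Crank--Nicolson rewriting of the estimate already established in Theorem~\ref{Thm:gs_grad_div_stability}.
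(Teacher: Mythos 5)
Your proposal is correct and follows essentially the same route as the paper: rewrite \eqref{eq:split_1} in the compact form analogous to \eqref{abtract_artificial_comp_2D_bis} with the extra term $\frac{\dt^2}{4}\bB\,\delta_t\bu^{n+1}/\dt$, test with $2\dt\,\bar\bu^{n+1}$ so that the $\bB$-term telescopes in the seminorm $|\cdot|_{\bB(\Omega)}$ via $(a-b)(a+b)=\|a\|^2-\|b\|^2$, add the squared pressure relation, and conclude with the same Gauss--Seidel cancellation and summation as in Theorem~\ref{Thm:gs_grad_div_stability}. Your Step~1 and the $\ell^\infty$ bookkeeping for $\bar p_{\dt}$ simply spell out details the paper leaves implicit.
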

\begin{proof}
We first notice that the momentum equation in \eqref{eq:split_1} can be rewritten in a form similar to \eqref{abtract_artificial_comp_2D_bis}:
\begin{align*}
\frac{\bu^{n+1}-\bu^n}{\dt} + A\bar{\bu}^{n+1}+\frac{\dt^2}{4}B\frac{\bu^{n+1}-\bu^n}{\dt} -
\GRAD(\varpi\DIV\tbu^{n+1} ) - (0,\dy (\varpi\dy \delta_t \bar{u}_2^{n+1}))\tr = - \GRAD p^{n-1/2}
\end{align*}
where $\tbu^{n+1}=(\bar{u}_1^{n+1},\bar{u}_2^n)\tr$.  Multiplying this equation by $2 \dt
 \bar{\bu}^{n+1}$, then using the identities  $2(a-b)a=\|a\|^2+\|a-b\|^2-\|b\|^2$ and $(a-b)(a+b)=\|a\|^2-\|b\|^2$,
and the coerciveness of $A$ in $\bH^1(\Omega)$, we obtain:
\begin{align*}
 & \|\bu^{n+1}\|^2_{\Ldeuxd} - \|\bu^{n}\|^2_{\Ldeuxd} +2 \nu \dt \| \bar{\bu}^{n+1}\|^2_{\bH^1(\Omega)} 
+ 2\dt(\varpi\DIV \tbu^{n+1},\DIV \bar{\bu}^{n+1}) + 
 \frac{\dt^2}{4}\left( |{\bu}^{n+1}|^2_{\bB(\Omega)} - |{\bu}^n|^2_{\bB(\Omega)}\right)\\
 & + \dt \left(\|\varpi^{\frac12}\dy \bar{u}_2^{n+1}\|_{\Ldeuxd}^2 + 
  \|\varpi^{\frac12}\dy \delta_t \bar{u}_2^{n+1}\|_{\Ldeuxd}^2 - \|\varpi^{\frac12}\dy \bar{u}_2^{n}\|_{\Ldeuxd}^2\right) -2
  \dt (p^{n-1/2},\DIV \bar{\bu}^{n+1}) \leq 0.
\end{align*}
The rest of the proof follows along the same lines as the proof of theorem \eqref{Thm:gs_grad_div_stability}.
\end{proof}
%

\section{Higher order methods}
The first order schemes discussed in the previous  two sections can be extended to second order by  at least two possible approaches described in \cite{GM14}.  
The resulting schemes are quite efficient if the linear systems are solved by means of iterative solvers.
In order to handle the 2D and 3D case together it is convenient to introduce the following operator corresponding to the mixed second order
derivatives appearing in the formulation:
\begin{eqnarray*}
C_{\triangle}  =  \begin{bmatrix} 0 & C_{12} \\ 0 & 0 \end{bmatrix} \text{ in 2D,  and } \\
C_{\triangle} =   \begin{bmatrix} 0 & C_{12} & C_{13} \\ 0 & 0 & C_{23} \\0 & 0 & 0\end{bmatrix} \text{ in 3D,}
\end{eqnarray*} 
with $C_{13}, C_{23}$ being defined similarly to $C_{12}$ i.e. $C_{i3} = \varpi \partial_{x_i x_3 }, i=1,2$.
An example of a 2D second order BDF bootstrapping procedure based on \eqref{eq:gs_grad_div2} and analogous to the scheme (5.1)-(5.2) of  \cite{GM14} is given by:
\begin{equation}
 \begin{cases}
 & \displaystyle  \frac{ \tbu^{n+1}- \tbu^n}{\dt}  + A \tbu^{n+1} + \GRAD {\tp}_1^{n+1} -  C_{\triangle} (\tbu^{n+1}-\tbu^{n})=\bef^{n+1},\\
  & \displaystyle   {\tp}^{n+1} - {\tp}^n + \varpi \DIV \tbu^{n+1} = 0,\\
 & \displaystyle  \frac{3 {\bu}^{n+1}-4 {\bu}^n+{\bu}^{n-1}}{2\dt}  + A\bu^{n+1} + \GRAD p^{n+1} - C_{\triangle} ({\bu}^{n+1}-2{\bu}^{n}+\bu^{n-1})=\bef^{n+1},\\
 & \displaystyle    p^{n+1} - p^{n} - (\tp^{n+1} - \tp^n) + \varpi \DIV{\bu}^{n+1} = 0.\\
  \end{cases}
\label{eq:dc_grad_div}
\end{equation} 
Note that the only difference with the scheme (5.1)-(5.2) of
\cite{GM14} is the presence of the terms
$ C_{\triangle} (\tbu^{n+1}-\tbu^{n})$ and
$C_{\triangle} ({\bu}^{n+1}-2{\bu}^{n}+\bu^{n-1})$ in the two momentum
equations.  Presuming enough smoothness of the exact solution, these
terms are of order $\dt$ and $\dt^2$ respectively, and their
presence is compatible with the overall second order of consistency of the
scheme.  In the case of the Navier-Stokes equations the advection terms
can be approximated by means of a first and second order
Adams-Bashfort (AB2) schemes in the first and second stage of the
bootstrapping procedure in \eqref{eq:dc_grad_div}.

As shown in \cite{GM14}, in the case of the full Navier-Stokes equations,
the defect correction schemes have better stability properties than
the high order schemes based on BDF time stepping.  Using the
third order approximation to the velocity and pressure
$\bu_0^n+\dt \bu_1^n+\dt^2 \bu_2^n$, $p_0^n+\dt p_1^n+\dt^2 p_2^n$, we
can write the third order scheme with a decoupled {\em grad-div}
operator, analogous to the scheme
\eqref{defect_ns_bootstrap0}-\eqref{defect_ns_bootstrap2}, as:


\begin{equation} \text{for } n \ge 0, \qquad
\begin{cases}
  \textbf{nl}_0^{n+1}=B \bu_0^n, \\ \displaystyle
  \frac{\bu_0^{n+1}-\bu_0^n}{\dt} + A\bu_0^{n+1}  
-\varpi\GRAD\DIV \bu_0^{n+1} + C_{\triangle}(\bu_0^{n+1}-\bu_0^n)  + \GRAD p_0^{n}  = \bef^{n+1} - \textbf{nl}_0^{n+1}\\
  p_0^{n+1} = p_0^{n}  - \varpi \DIV\bu_0^{n+1},\\
  d\bu_0^{n+1} = (\bu_0^{n+1}-\bu_0^n)/\dt, \quad dp_0^{n+1} =
  (p_0^{n+1} -p_0^n)/\dt
\end{cases} \label{defect_ns_split0}
\end{equation}
\begin{equation} \text{for } n \ge 1, \qquad
\begin{cases}
d^2\bu_0^{n+1}= (d\bu_0^{n+1}-d\bu_0^{n})/\dt, \\
\textbf{nl}_1^n=B (\bu_0^{n} + \tau \bu_1^{n-1}), \\ \displaystyle
\frac{\bu_1^{n}-\bu_1^{n-1}}{\dt} + A\bu_1^{n}  -\varpi \GRAD\DIV
\bu_1^n + C_{\triangle}(\bu_1^{n}-\bu_1^{n-1}) - C_{\triangle}(\bu_0^{n}-\bu_0^{n-1}) +\\ \displaystyle
 \GRAD (p_1^{n-1} + dp_0^{n} ) = -\frac12 d^2\bu_0^{n+1}
- \frac{\textbf{nl}_1^n - \textbf{nl}_0^n}{\dt} + \varpi C d\bu_0^{n+1},\\
p_1^{n} = p_1^{n-1}  + dp_0^{n}  - \varpi  \DIV\bu_1^{n},\\
d\bu_1^{n} = (\bu_1^{n}-\bu_1^{n-1})/\dt, 
\quad dp_1^{n} = (p_1^{n}-p_1^{n-1})/\dt,
\end{cases}\label{defect_ns_split1}
\end{equation}
\begin{equation} \text{for } n \ge 2, \quad
\begin{cases}
d^2\bu_1^{n}=(d\bu_1^{n}-d\bu_1^{n-1})/\dt, \qquad
d^3\bu_0^{n+1}=(d^2\bu_0^{n+1}-d^2\bu_0^{n})/\dt, \\
\textbf{nl}_2^{n-1}=B (\bu_0^{n-1} + \tau \bu_1^{n-1} +\tau^2 \bu_2^{n-2})\\ \displaystyle
\frac{\bu_2^{n-1}-\bu_2^{n-2}}{\dt} + A\bu_2^{n-1} -\varpi\GRAD \DIV
\bu_2^{n-1} + C_{\triangle}(\bu_2^{n-1}-\bu_2^{n-2}) - C_{\triangle}(\bu_1^{n-1}-\bu_1^{n-2}) + \\ \displaystyle 
\GRAD (p_2^{n-2} + dp_1^{n-1})  =  -\frac12 d^2\bu_1^{n}  + \frac16 d^3\bu_0^{n+1}
 -  \frac{\textbf{nl}_2^{n-1} - \textbf{nl}_1^{n-1}}{\dt^2} + \varpi C d\bu_1^{n+1},\\
p_2^{n-1} = p_2^{n-2} + dp_1^{n-1} - \varpi \DIV\bu_2^{n-1},\\
\bu^{n-1} =  \bu_0^{n-1} + \tau \bu_1^{n-1} +\tau^2 \bu_2^{n-1}, \quad
p^{n-1} =  p_0^{n-1} + \tau p_1^{n-1} +\tau^2 p_2^{n-1}.
\end{cases}\label{defect_ns_split2}
\end{equation}
In 3D, this scheme is the defect correction extension of the scheme
\eqref{abtract_artificial_comp_3D}.  Although we are presently unable
to prove its stability, we use it in the numerical experiments
presented below. Our tests show that this scheme is unconditionally
stable in the case of the unsteady Stokes equations.

Note that all these schemes require only the solution of problems of the type 
\[
v- \dt \DIV (\kappa \GRAD v )=r,
\]
for each component of the velocity, where $\kappa$ is a diagonal
matrix.  For example, in 2D either
\[
\kappa = \left[\begin{matrix}
\displaystyle \nu+\varpi & 0 \\
 0&\displaystyle  \nu 
 \end{matrix}
  \right] \quad \text{or} \quad
\kappa = \left[\begin{matrix}
\displaystyle \nu& 0 \\
 0&\displaystyle  \nu +\varpi
 \end{matrix}
  \right],
 \]
 when we solve for the first or the second Cartesian component of
 the velocity, respectively.  The solution process for the
 incompressible unsteady Navier-Stokes equations is thereby reduced to
 the solution of a fixed number of classical parabolic problems.

Using the defect correction approach of \cite{GM14} the direction splitting scheme \eqref{eq:split_1} can also be extended to second order as follows:
\begin{equation}
 \begin{cases}
  &\displaystyle \left(I + \frac{\dt}{2} (A_{11}+C_{11})\right)\left(I+\frac{\dt}{2} A_{12}\right) \frac{\tu_1^{n+1} - \tu_1^{n}}{\dt} + (A_{11} + C_{11}+A_{12}) \tu_1^{n}=\\
  &\displaystyle  \hspace{5cm}-\frac{1}{2}C_{12} (\tu_2^{n}+\tu_2^{n-1}) - \dx \tp^{n-1/2} + f_1^{n+1/2},\\
  &\displaystyle \left(I + \frac{\dt}{2} (A_{22}+C_{22})\right)\left(I+\frac{\dt}{2} A_{21}\right) \frac{\tu_2^{n+1} - \tu_2^{n}}{\dt} + (A_{22} + C_{22}+A_{21}) \tu_2^{n}= \\
  & \displaystyle  \hspace{5cm}-\frac{1}{2}C_{21} ( \tu_1^{n+1} +\tu_1^{n})- \dy \tp^{n-1/2} + f_2^{n+1/2}, \\
  &\displaystyle \tp^{n+1/2} - \tp^{n-1/2} + \frac{\varpi}{2} \DIV  ( \tilde{\bu}^{n+1}+\tilde{\bu}^n) = 0,
  \end{cases}
  \label{eq:split_02}
\end{equation}
  \begin{equation}
  \begin{cases}
  & d \tu^{n+1} = (\tu_2^{n+1} -\tu_2^{n})/\dt, \\
 & \displaystyle \left(I + \frac{\dt}{2} (A_{11}+C_{11})\right)\left(I+\frac{\dt}{2} A_{12}\right) \frac{u_1^{n+1} - u_1^{n}}{\dt} + (A _{11}+ C_{11}+A_{12})) u_1^{n}=  \\
 &\displaystyle \hspace{4cm}-C_{12} \left(\frac{1}{2} (u_2^{n}+u_2^{n-1}) +\dt d \tu^{n+1} \right)- \dx p^{n-1/2} + f_1^{n+1/2},\\
 & \displaystyle \left(I + \frac{\dt}{2} (A_{22}+C_{22})\right)\left(I+\frac{\dt}{2} A_{21}\right) \frac{u_2^{n+1} - u_2^{n}}{\dt} + (A_{22} + C_{22}+A_{21}) u_2^{n}= \\
 &\displaystyle \hspace{4cm}-\frac{1}{2}C_{21} (u_1^{n+1} +u_1^{n})- \dy p^{n-1/2} + f_2^{n+1/2}, \\
 & \displaystyle p^{n+1/2} - p^{n-1/2} -(\tp^{n+1/2} - \tp^{n-1/2})+ \frac{\varpi}{2} \DIV (\bu^{n+1}+\bu^n) = 0.
  \end{cases}
\label{eq:split_12}
\end{equation}
The nonlinear terms of the Navier-Stokes equations can be included in
the above algorithm exactly as in the scheme \eqref{defect_ns_split0}-\eqref{defect_ns_split1}.

\section{Numerical results}
We first present some two dimensional numerical results comparing the
performance of the third order artificial compressibility method in
\cite{GM14}, \eqref{defect_ns_bootstrap0}-\eqref{defect_ns_bootstrap2}
and the scheme with the explicit mixed derivatives
\eqref{defect_ns_split0}-\eqref{defect_ns_split2}.  The spatial
discretization is done by means of the classical MAC finite
volume stencil.  The accuracy is tested on the following manufactured
solution of the unsteady Stokes equations:
\begin{equation}
\bu= (\sin x \sin(y+t),\cos x \cos(y+t)), \quad p=\cos x \sin(y+t).
\label{analytic}
\end{equation}
and the problem is solved in $\Omega=(0,1){\times}(0,1)$, for $0 \leq t \leq T:=10$ with Dirichlet boundary conditions (given by the pointwise
values of the exact solution).  The initial condition is the exact solution at $t=0$. 
In figure \ref{fig:error1} we present the $L^2$ norm of the errors in the velocity, pressure, and the divergence for the unsteady Stokes equations .  
The results with both schemes are very similar, however,  the equations in \eqref{defect_ns_split0}-\eqref{defect_ns_split2} are much easier to solve 
since all velocity components are decoupled.
\begin{figure}[h]
\centerline{%
\subfigure[$\|\bu-\bw\|_{\bL^2}$ versus $\dt$]{%
\includegraphics[width=0.33\textwidth,bb=20 9 240 238,clip=]{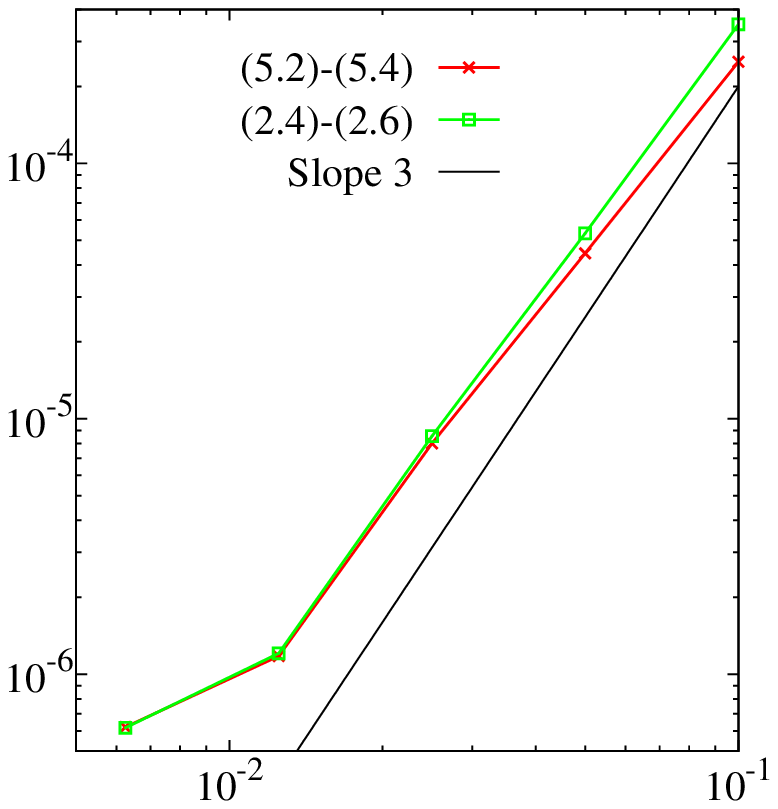}}%
\subfigure[$\|p-q\|_{L^2}$ versus $\dt$]{%
\includegraphics[width=0.33\textwidth,bb=20 9 240 238,clip=]{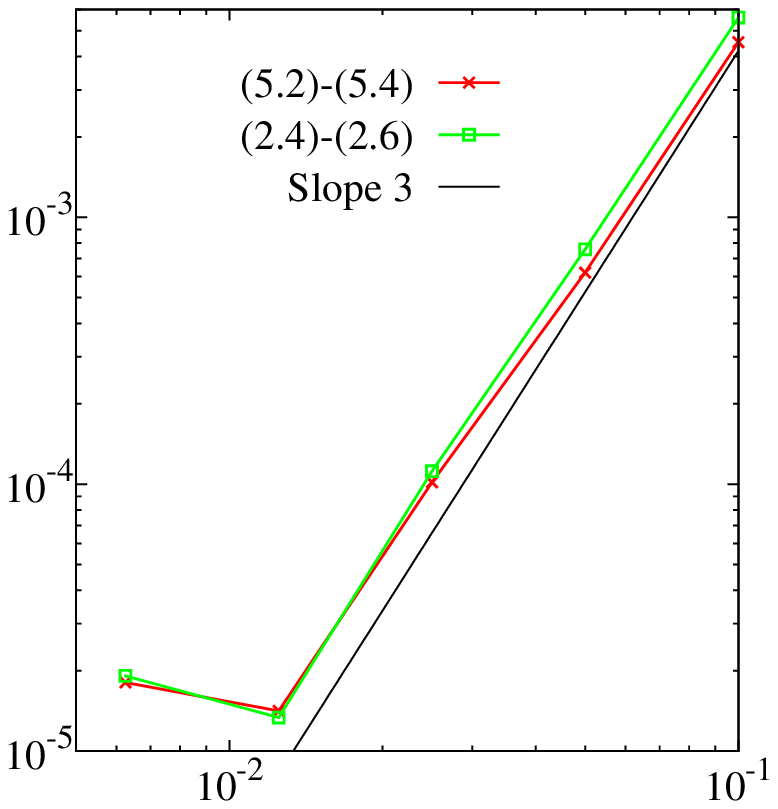}}%
\subfigure[$\|\DIV\bw\|_{L^2}$ versus $\dt$]{%
\includegraphics[width=0.33\textwidth,bb=20 9 240 238,clip=]{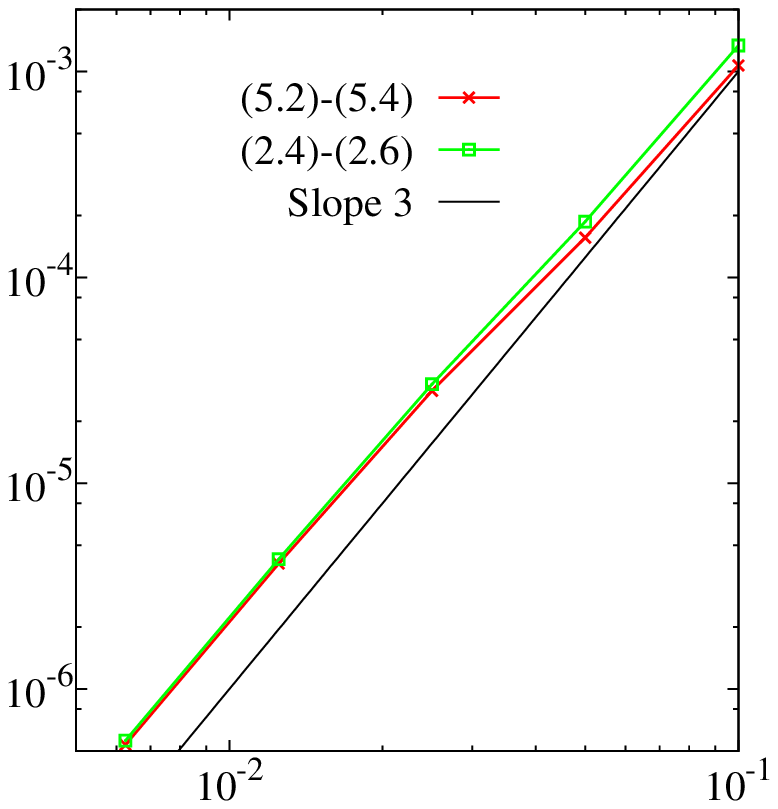}}}
\caption{Log-log plot of the $\bL^2$-norm of the error at
  $T=10$ of the 2D unsteady Stokes equations on a $200\times200$ MAC grid.
  Continuous lines represent the slope 3,  solid lines with {\color{red}{$\times$}} symbols represent the results 
  with \eqref{defect_ns_split0}-\eqref{defect_ns_split2}, solid lines with {\color{green}{$\Box$}} symbols represent the results 
  with \eqref{defect_ns_bootstrap0}-\eqref{defect_ns_bootstrap2}.  }
\label{fig:error1}
\end{figure}

Next, we compare the accuracy of the second order scheme
\eqref{defect_ns_split0}-\eqref{defect_ns_split1} and the second order
direction-splitting bootstrapping scheme
\eqref{eq:split_02}-\eqref{eq:split_12} in figure \ref{fig:error2}.
Although being slightly less accurate in the pressure, the direction
splitting scheme clearly has a good potential since it is less
computationally demanding; we recall that this schemes only requires
the solution of tridiagonal problems and thus can be massively
parallelized as in \cite{GuermondMinevADI}.
\begin{figure}[h]
\centerline{%
\subfigure[$\|\bu-\bw\|_{\bL^2}$ versus $\dt$]{%
\includegraphics[width=0.33\textwidth,bb=20 9 240 238,clip=]{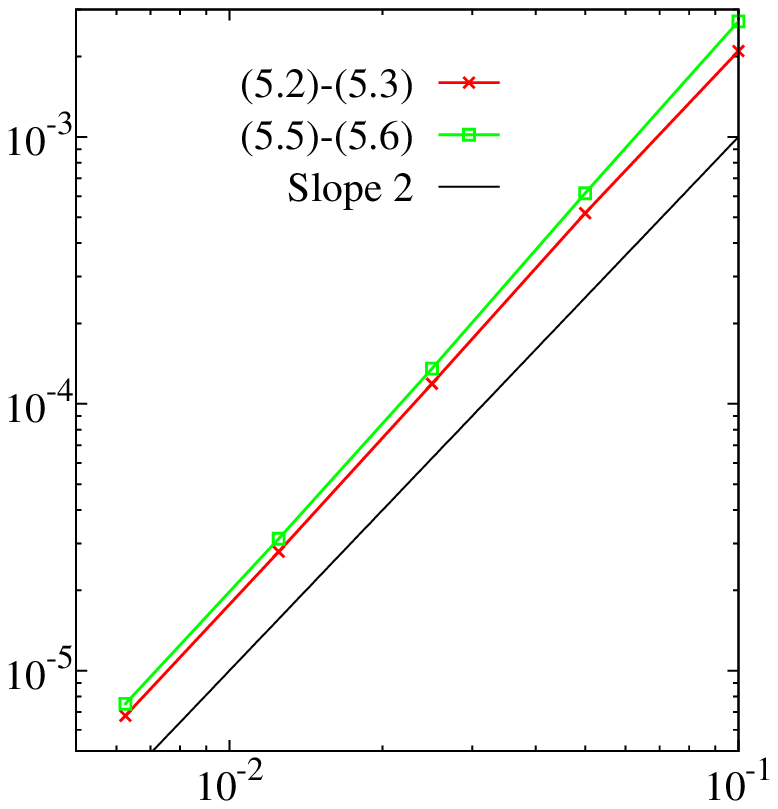}}%
\subfigure[$\|p-q\|_{L^2}$ versus $\dt$]{%
\includegraphics[width=0.33\textwidth,bb=20 9 240 238,clip=]{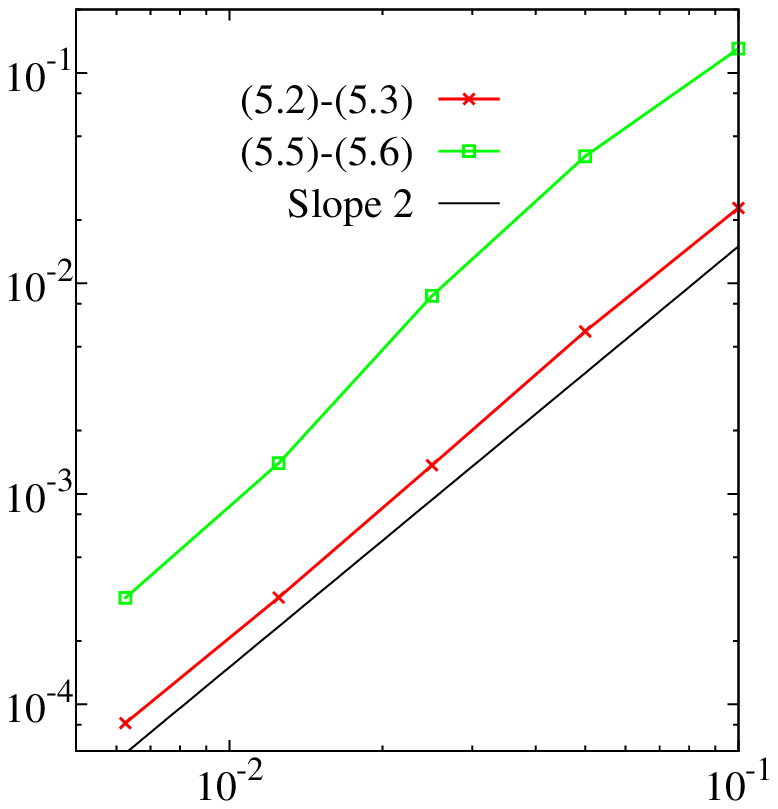}}%
\subfigure[$\|\DIV\bw\|_{L^2}$ versus $\dt$]{%
\includegraphics[width=0.33\textwidth,bb=20 9 240
238,clip=]{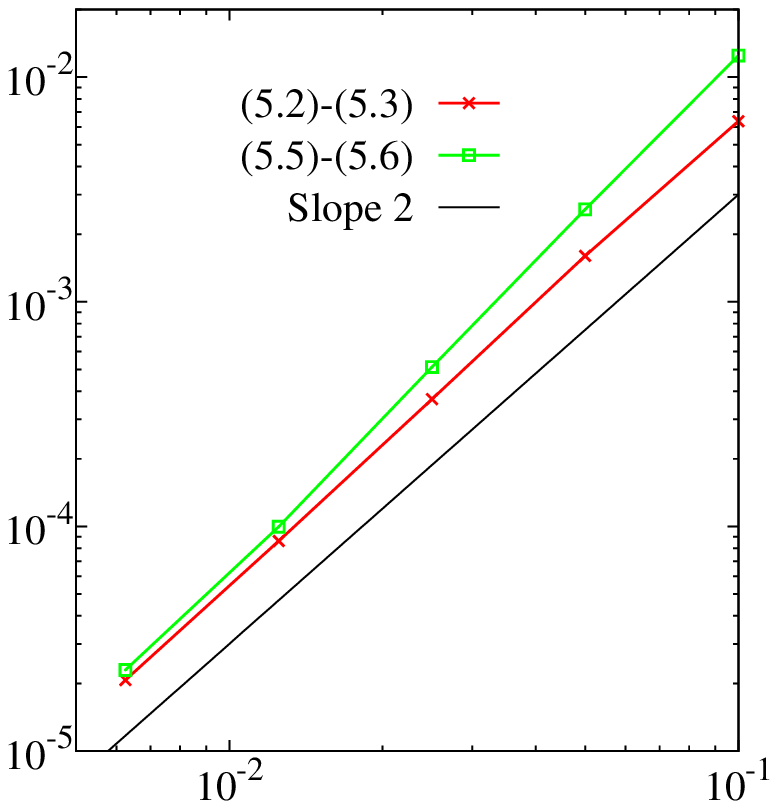}}}
\caption{Log-log plot of the $\bL^2$-norm of the error at $T=10$ of
  the 2D unsteady Stokes equations on a $200\times200$ MAC grid.
  Continuous lines represent the slope 2, solid lines with
  {\color{red}{$\times$}} symbols represent the results with
  \eqref{defect_ns_split0}-\eqref{defect_ns_split1}, solid lines with
  {\color{green}{$\Box$}} symbols represent the results with
  \eqref{eq:split_02}-\eqref{eq:split_12}.  }
\label{fig:error2}
\end{figure}

Finally we present 3D numerical results that demonstrate the accuracy
of \eqref{defect_ns_split0}-\eqref{defect_ns_split1} in the case of
the unsteady Stokes and the Navier-Stokes problem at Re=100. The 3D
manufactured solution is given by:
$u_1=\cos x \sin y \sin(z+t), u_2=\sin x \cos y \sin(z+t), u_3=-2 \sin
x \sin y \cos(z+t), p=\cos(x+y+z+t)$.
The results on a grid of $20\times 20\times 20$ MAC cells are
presented in figure \ref{fig:error3}.  Again, the defect correction
method \eqref{defect_ns_split0}-\eqref{defect_ns_split1} demonstrates
good accuracy and robustness, maintaing stability even at relatively
large time steps $dt=0.1$.
\begin{figure}[h]
\centerline{
\includegraphics[width=0.33\textwidth,bb=20 9 240 238,clip=]{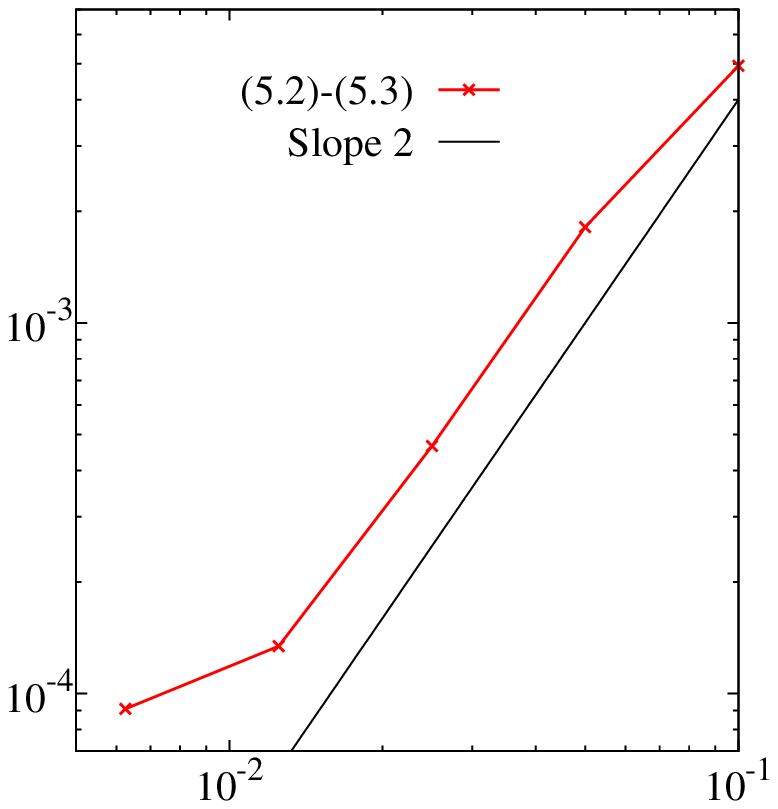}%
\includegraphics[width=0.33\textwidth,bb=20 9 240 238,clip=]{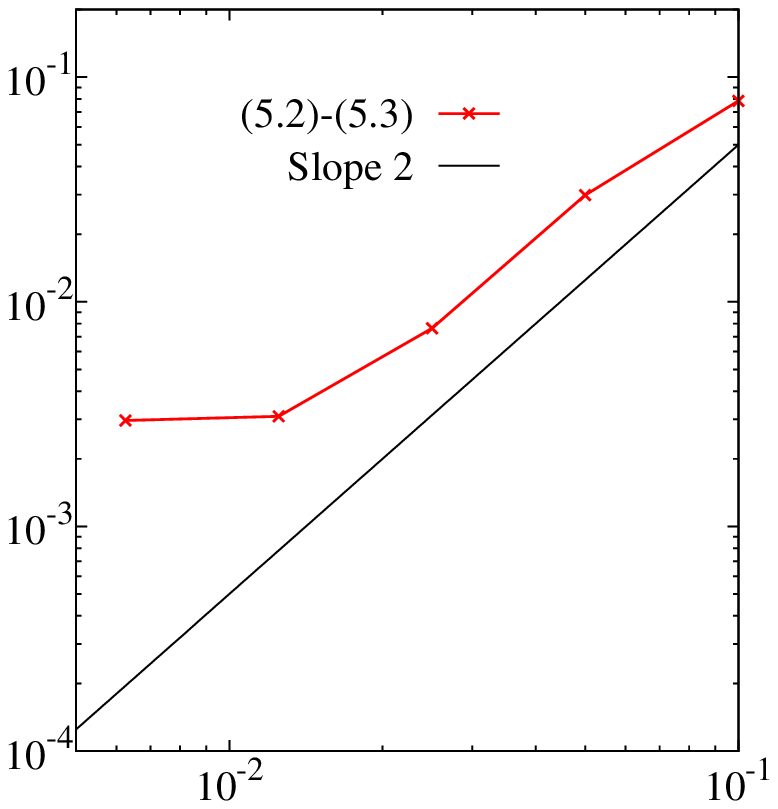}%
\includegraphics[width=0.33\textwidth,bb=20 9 240 238,clip=]{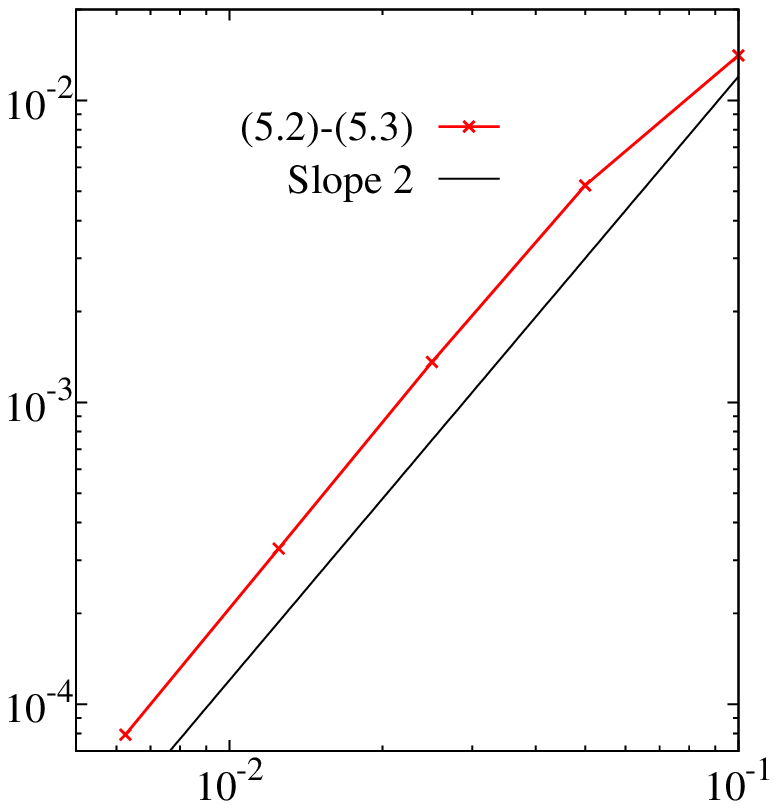}%
}
\centerline{
\subfigure[$\|\bu-\bw\|_{\bL^2}$ versus $\dt$]{%
\includegraphics[width=0.33\textwidth,bb=20 9 240 238,clip=]{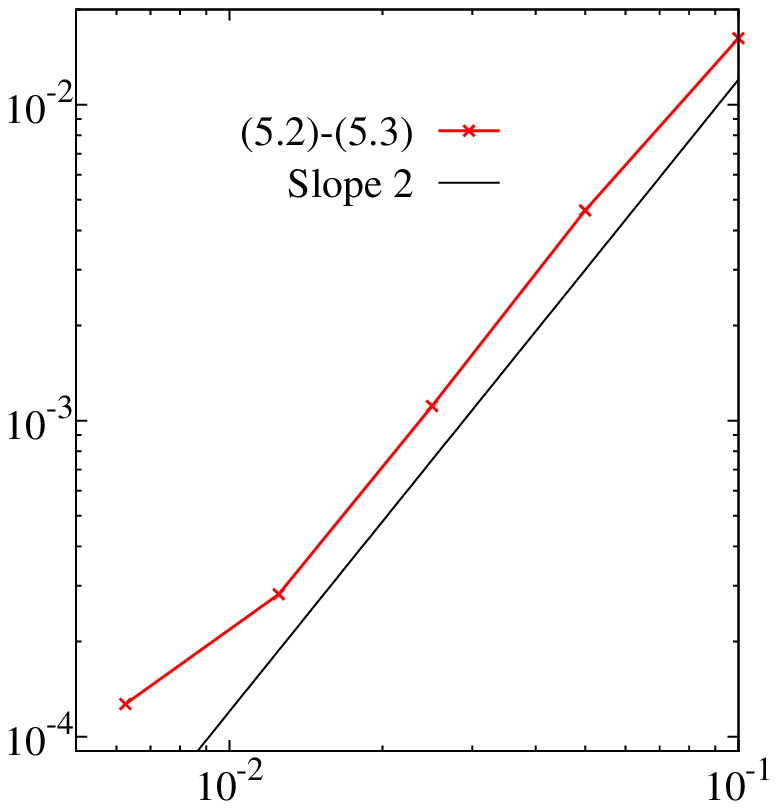}}%
\subfigure[$\|p-q\|_{L^2}$ versus $\dt$]{%
\includegraphics[width=0.33\textwidth,bb=20 9 240 238,clip=]{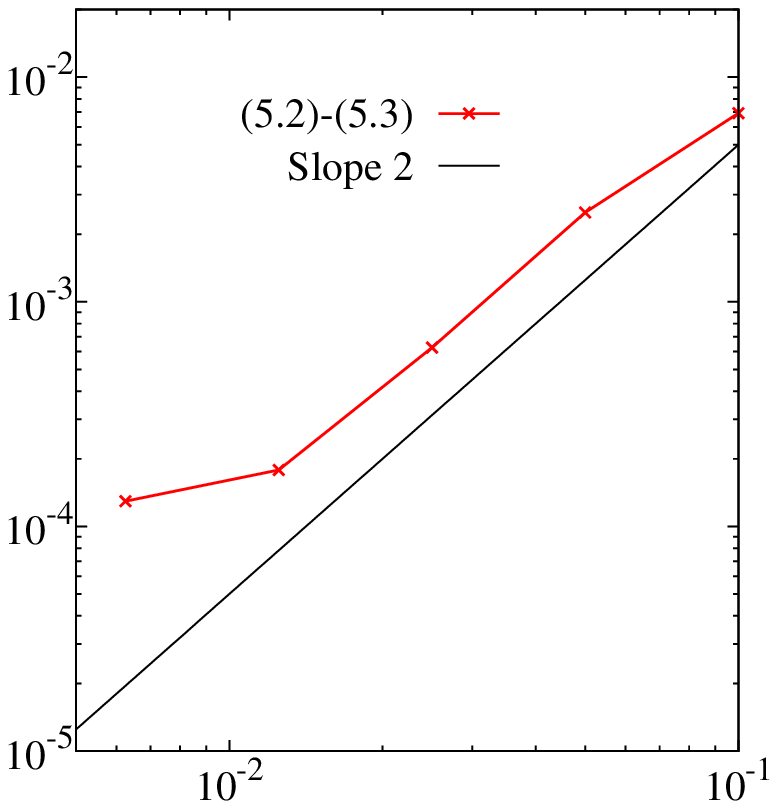}}%
\subfigure[$\|\DIV\bw\|_{L^2}$ versus $\dt$]{%
\includegraphics[width=0.33\textwidth,bb=20 9 240 238,clip=]{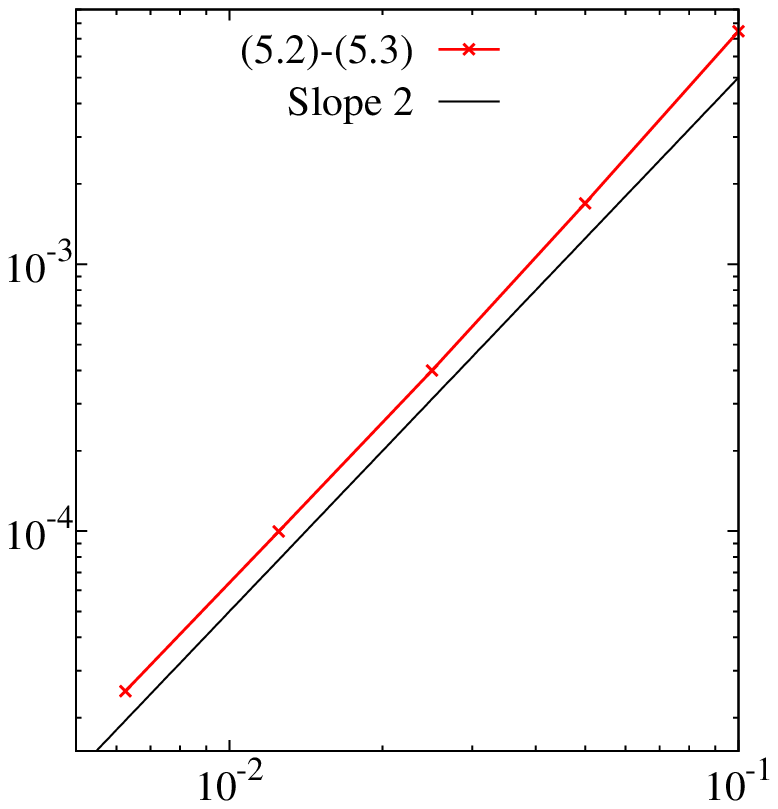}}%
}
\caption{Log-log plot of the $\bL^2$-norm of the error at
  $T=10$ on a $20\times20\times 20$ MAC grid. Top row:  3D unsteady
  Stokes equations, bottom row: 3D Navier-Stokes equations at Re=100.
  Continuous lines represent the slope of 2,  solid lines with {\color{red}{$\times$}} symbols represent the results with \eqref{defect_ns_split0}-\eqref{defect_ns_split1}.  }
\label{fig:error3}
\end{figure}


\section{Conclusions}
In this paper we have revisited the high-order artificial
compressibility methods for incompressible flow of \cite{GM14} and we
have demonstrated that the coupling of the Cartesian components of the
velocity, which is due to the presence of the implicit $\GRAD \DIV$
operator, can be avoided. The resulting schemes thus require only the
solution of a set of classical scalar parabolic problems of the type:
$u_k - \dt \DIV \left(\kappa \GRAD u_k\right)=f$. These schemes can
also be factorized direction-wise to yield computationally very
simple, and yet accurate direction splitting schemes.

When compared to the classical Chorin-Temam-type projection schemes,
the algorithms proposed in this paper are computationally more
efficient since they require the solution of problems with
conditioning scaling like $\dt h^{-2}$
whereas projection methods require the solution of an elliptic
problem for the pressure whose conditioning scales like $h^{-2}$.  In
addition, the present approach allows to develop schemes
of any order in time unlike the projection methods whose accuracy is
limited to second order.

\bibliographystyle{abbrvnat} 
\bibliography{ref}

\begin{thebibliography}{8}
\providecommand{\natexlab}[1]{#1}
\providecommand{\url}[1]{\texttt{#1}}
\expandafter\ifx\csname urlstyle\endcsname\relax
  \providecommand{\doi}[1]{doi: #1}\else
  \providecommand{\doi}{doi: \begingroup \urlstyle{rm}\Url}\fi

\bibitem[Douglas(1962)]{D62}
J.~Douglas, Jr.
\newblock Alternating direction methods for three space variables.
\newblock \emph{Numer. Math.}, 4:\penalty0 41--63, 1962.

\bibitem[Guermond and Minev(2015)]{GM14}
J.-L. Guermond and P.~Minev.
\newblock High-order time stepping for the incompressible {N}avier-{S}tokes
  equations.
\newblock \emph{SIAM J. Sci. Comput.}, 37\penalty0 (6):\penalty0 A2656--A2681,
  2015.

\bibitem[Guermond and Minev(2011)]{GuermondMinevADI}
J.-L. Guermond and P.~D. Minev.
\newblock A new class of massively parallel direction splitting for the
  incompressible {N}avier-{S}tokes equations.
\newblock \emph{Computer Methods in Applied Mechanics and Engineering},
  200\penalty0 (23-24):\penalty0 2083--2093, 2011.

\bibitem[Ladyzhenskaya(1970)]{Ladhyz70b}
O.~A. Ladyzhenskaya.
\newblock \emph{The mathematical theory of viscous incompressible flow (in
  Russian)}.
\newblock Second Russian Edition, revised and extended. Nauka, Moscow, 1970.

\bibitem[Temam(1977)]{Te77}
R.~Temam.
\newblock \emph{{N}avier--{S}tokes Equations}, volume~2 of \emph{Studies in
  Mathematics and its Applications}.
\newblock North-Holland, 1977.

\bibitem[Vabishchevich(2014)]{Vab14}
P.~Vabishchevich.
\newblock \emph{Additive operator-difference schemes: splitting schemes}.
\newblock De Gruyter, Berlin, Boston, 2014.

\bibitem[Vladimirova et~al.(1966)Vladimirova, Kuznetsov, and
  Yanenko]{Yanenko_1966}
N.~Vladimirova, B.~Kuznetsov, and N.~Yanenko.
\newblock Numerical calculation of the symmetrical flow of viscous
  incompressible liquid around a plate (in {R}ussian).
\newblock In \emph{Some Problems in Computational and Applied Mathematics}.
  Nauka, Novosibirsk, 1966.

\bibitem[Yanenko(1971)]{MR46:6613}
N.~N. Yanenko.
\newblock \emph{The method of fractional steps. {T}he solution of problems of
  mathematical physics in several variables}.
\newblock Springer-Verlag, New York, 1971.

\end{thebibliography}
\end{document}